\newtheorem{theorem}{Theorem}[section]
\newtheorem{proposition}[theorem]{Proposition}
\newtheorem{lemma}[theorem]{Lemma}
\newtheorem{corollary}[theorem]{Corollary}
\theoremstyle{definition}
\newtheorem{remark}[theorem]{Remark}
\newtheorem{definition}[theorem]{Definition}
\newtheorem{example}[theorem]{Example}
\newcommand{\orb}{\mathcal{O}}
\newcommand{\linex}{\mathcal{L}}
\DeclareMathOperator*{\row}{\rm Row}
\DeclareMathOperator{\kpro}{{\it K}-Pro}
\newcommand{\inc}[2]{\mathrm{Inc}^{#1}(#2)}
\newcommand{\prodchains}{J([a] \times [b] \times [c])}
\newcommand{\prodchainstwo}{J([2] \times [b] \times [c])}
\newcommand{\recomb}[2]{\Delta_{#1}^{#2} I}
\newcommand{\genrecomb}[2]{\Delta_{#1}^{#2} I}
\newcommand{\prorecomb}[2]{recombination}
\newcommand{\new}[1]{\textcolor{black}{#1}}
\title{Homomesy in products of three chains and multidimensional recombination}
\author{Corey Vorland}
\begin{document}

\maketitle

\begin{abstract}
	J.~Propp and T.~Roby isolated a phenomenon in which a statistic on a set has the same average value over any orbit as its global average, naming it homomesy. They proved that the cardinality statistic on order ideals of the product of two chains poset under rowmotion exhibits homomesy. In this paper, we prove an analogous result in the case of the product of three chains where one chain has two elements. In order to prove this result, we generalize from two to $n$ dimensions the recombination technique that D.~Einstein and Propp developed to study homomesy. We see that our main homomesy result does not fully generalize to an arbitrary product of three chains, nor to larger products of chains; however, we have a partial generalization to an arbitrary product of three chains. Additional corollaries include refined homomesy results in the product of three chains and a new result on increasing tableaux. We conclude with a generalization of recombination to any ranked poset and a homomesy result for the Type B minuscule poset cross a two element chain.
\end{abstract}

\section{Introduction}

Homomesy is a surprisingly ubiquitous phenomenon, isolated by J.~Propp and T.~Roby \cite{PR2015}, that occurs when a statistic on a combinatorial set has the same average value over orbits of that action as its global average. Homomesy has been found in actions on tableaux \cite{BPS2016, PR2015}, actions on binary strings \cite{Roby2016}, rotations on permutation matrices \cite{Roby2016}, certain products of toggles on noncrossing partitions \cite{EFGJMPR2016}, Suter's action on Young diagrams \cite{PR2015} (with proof due to D.~Einstein), linear maps acting on vector spaces \cite{PR2015}, a phase-shift action on simple harmonic motion \cite{PR2015}, and others. A motivating instance of this phenomenon is the action of \emph{rowmotion} on order ideals of a poset. Rowmotion on an order ideal is defined as the order ideal generated by the minimal poset elements that are not in the order ideal; this action has generated significant interest in recent algebraic combinatorics, giving rise to many beautiful results \cite{CF1995, DPS2017, EP2014, PR2015, SW2012}. For a survey of recent homomesy results, see \cite{Roby2016}; for an introduction to dynamical algebraic combinatorics, including rowmotion, see \cite{Striker2017}. Our initial motivation for this paper was Propp and Roby's result that the cardinality statistic on order ideals of the product of two chains poset $[a]\times[b]$ under rowmotion exhibits homomesy \cite{PR2015}. D.~Rush and K.~Wang generalized this result by showing all \emph{minuscule posets} exhibit homomesy under rowmotion using the cardinality statistic \cite{RW2015}; the product of chains is the Type A case of this result.

In this paper, we investigate homomesy in the product of three chains, or equivalently, a type A minuscule poset cross a chain. More specifically, we show order ideals of $[2]\times[b]\times[c]$ exhibit homomesy under promotion with cardinality statistic. However, we observe such a homomesy result does not hold for a general product of three chains. We also obtain a homomesy result on order ideals of a type B minuscule poset cross a chain of size two. To prove these results, we generalize the \emph{recombination} technique of Einstein and Propp \cite{EP2014} from two to $n$ dimensions. Recombination is a tool that Einstein and Propp developed to translate homomesy results between rowmotion and a related action called \emph{promotion} by J.~Striker and N.~Williams in \cite{SW2012}. Einstein and Propp showed recombination gives an equivariant bijection between order ideals of $[a] \times [b]$ under rowmotion and order ideals of $[a] \times [b]$ under promotion. Using a different method, Striker and Williams showed that there is an equivariant bijection between order ideals of any ranked poset under promotion and under rowmotion. This means that the orbit structure is the same under rowmotion and promotion, so if we want to study the orbits of rowmotion, we could instead study the orbits of promotion, or vice versa. K.~Dilks, O.~Pechenik, and Striker \cite{DPS2017} generalized promotion to higher dimensions. Furthermore, they showed that for a given poset, there is an equivariant bijection between any of the multidimensional promotions they defined. Underlying all these results is the \emph{toggle group} of P.~Cameron and D.~Fon-der-Flaass \cite{CF1995}, who provided access to the tools of group theory by exhibiting rowmotion as a toggle group action.

\textbf{Our first main theorem}, Theorem \ref{thm:mainhom}, says that the order ideals of $[2] \times [b] \times [c]$ exhibit homomesy with average value $bc$ under promotion when using the cardinality statistic. To prove this theorem, we generalize the recombination result of Einstein and Propp from a product of chains $[a] \times [b]$ to a product of chains $[a_1] \times \dots \times [a_n]$ in our second main theorem, Theorem \ref{thm:genrecomb}. As part of proving our first main theorem, we also translate a homomesy result on increasing tableaux of shape $2 \times b$ under \emph{K-promotion} with statistic box entry summation to order ideals of $[2] \times [b] \times [c]$ under a specific promotion with cardinality statistic. We also prove the following additional results. In Propositions \ref{prop:334} and \ref{prop:22222}, we show that our homomesy result does not generalize to order ideals of $[a]\times [b] \times [c]$ or order ideals of $[2] \times \dots \times [2]$ under promotion with cardinality statistic. Although our result does not generalize fully to products of three chains, using Pechenik's homomesy result on the \emph{frame} of an increasing tableaux of shape $a \times b$ with statistic box entry summation \cite{Pechenik2017}, in Corollary \ref{cor:gen3chain}, we establish homomesy on $[a]\times [b] \times [c]$ under promotion with cardinality statistic on the ``outside" of the poset. Additionally, Corollaries \ref{cor:rotsym} and \ref{cor:gen3chain} include refinements of our main homomesy result and this partial generalization, respectively, where we consider the cardinality statistic on certain symmetric subposets of a product of chains. In Corollary \ref{cor:tabhom} we also use our main result to show a new homomesy result on increasing tableaux of shape $a \times b$ with entries at most $a+b+1$ under $K$-promotion with statistic box entry summation. In Theorem \ref{thm:moregenrecomb}, we generalize the recombination result of Theorem \ref{thm:genrecomb} from a product of chains to any ranked poset. We use this for Corollary \ref{cor:typebmin}, a homomesy result on order ideals of a type B minuscule poset cross a chain of size two under promotion with cardinality statistic. Lastly, Theorem \ref{thm:conjtoggle} explicitly states a bijection between order ideals of a ranked poset under different $n$-dimensional promotions by presenting a conjugating toggle group element.

In Section 2, we begin with introductory definitions and results, much of which is from Striker and Williams \cite{SW2012} and Dilks, Pechenik, and Striker \cite{DPS2017}. In Section 3, we state relevant material from Propp and Roby \cite{PR2015} and Einstein and Propp \cite{EP2014} and work to generalize some of these concepts.
In Section 4, we present our two main results, the homomesy result of Theorem \ref{thm:mainhom} and the generalization of recombination in Theorem \ref{thm:genrecomb}. In Section 5, we present several corollaries, summarized above. In Section 6, we generalize recombination to any ranked poset, obtaining a corollary involving the type B minuscule poset, and, finally, give a theorem presenting a toggle group element to conjugate between different $n$-dimensional promotions.

\section{Rowmotion and promotion background}
We begin by recalling definitions regarding posets, rowmotion, and promotion. 
\begin{definition}
	A \textit{poset} $P$ is a set with a binary relation, denoted $\le$, that is reflexive, weakly antisymmetric, and transitive. Given $e,f \in P$, $f$ \emph{covers} $e$ if $e<f$ and there is no element $x\in P$ such that $e<x<f$. A subset $I$ of $P$ is called an \textit{order ideal} if for any $t \in I$ and $s \le t$ in $P$, $s \in I$. Let $J(P)$ denote the set of order ideals of $P$. A subset $F$ of $P$ is called an \emph{order filter} if for any $t \in F$ and $s \ge t$ in $P$, $s \in F$.
\end{definition}

\begin{definition}
	\label{def:chainposet}
	Let $n \in \mathbb{N}$ and let $[n]$ denote the poset $\{1,2,\dots,n\}$ with the usual less than or equal to $\le$. This is the chain with $n$ elements.
\end{definition}

Some definitions that follow are valid for infinite posets; however, for the rest of this paper, we only consider finite posets. We continue by defining a toggle action on an order ideal of poset.

\begin{definition}
	Let $P$ be a poset. For any $e \in P$, the \textit{toggle} $t_e: J(P) \rightarrow J(P)$ is defined as follows:
	\[ 
	t_e(I)=
	\begin{cases} 
	I \bigcup \{ e \} \qquad & \text{if } e \notin I \text{ and } I \bigcup \{ e \} \in J(P) \\
	I \setminus \{ e \} &\text{if } e \in I \text{ and } I \setminus \{ e \} \in J(P) \\
	I &\text{otherwise.}
	\end{cases}
	\]
\end{definition}

\begin{remark}
	\label{rmk:commute}
	The toggles $t_e$ and $t_f$ commute whenever neither $e$ nor $f$ covers the other.
\end{remark}

\emph{Rowmotion}, denoted Row, is defined as follows.

\begin{definition}
	Let $P$ be a poset and $I \in J(P)$.  $\row(I)$ is the order ideal generated by the minimal elements of $P$ not in $I$. In other words, if $t$ is a minimal element of $P \setminus I$ and $s \le t$, then $s \in \row(I)$.
\end{definition}

However, this is not the only way to view rowmotion.  Cameron and Fon-der-Flaass proved that we may instead toggle elements from top to bottom.

\begin{definition}
	A \emph{linear extension} of a poset $P$ is a bijective function $\linex:P\rightarrow [n]$ where $|P|=n$ such that if $p_1< p_2$ in $P$ then $\linex(p_1)< \linex(p_2)$. Let $\mathcal{L}(P)$ denote the set of linear extensions of $P$.
\end{definition}

\begin{theorem}[\protect{\cite[Lemma 1]{CF1995}}]
	Let $\linex : P \rightarrow [n]$ be in $\mathcal{L}(P)$.  Then $t_{\linex^{-1}(1)}t_{\linex^{-1}(2)} \cdots t_{\linex^{-1}(n)}$ acts as rowmotion.
\end{theorem}

The benefit of the toggle perspective is that we can study other actions that are closely related to rowmotion. In \cite{SW2012}, Striker and Williams defined another action, which they called \emph{promotion}, on order ideals of ranked posets using a projection to a two-dimensional lattice. By defining columns on ranked posets, promotion is the action that toggles columns from left to right. A precise definition of this is stated using Definition \ref{def:hypetoggle} and Proposition \ref{prop:rowandpro}. Note that this promotion action is distinct but related to Sch{\"u}tzenberger's promotion action on linear extensions of posets, defined in \cite{Sch1972}. If we denote promotion on order ideals as Pro, we may see that Row and Pro are linked in the following way.

\begin{theorem}[\protect{\cite[Theorem 5.2]{SW2012}}]
	\label{thm:proroweq}
	For any ranked, finite poset $P$, there is an equivariant bijection between $J(P)$ under $\mathrm{Pro}$ and $J(P)$ under $\mathrm{Row}$.
\end{theorem}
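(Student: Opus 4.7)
The plan is to show that $\mathrm{Pro}$ and $\mathrm{Row}$ are conjugate as elements of the toggle group acting on $J(P)$. If $w$ is a toggle group element satisfying $w \cdot \mathrm{Pro} \cdot w^{-1} = \mathrm{Row}$, then $\phi(I) := w(I)$ will be an equivariant bijection, since the identity $\phi \circ \mathrm{Pro} = \mathrm{Row} \circ \phi$ is just the conjugation relation rewritten as an intertwiner.

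First, I would use the Cameron--Fon-der-Flaass theorem above to write $\mathrm{Row}$ as the product of all toggles $t_e$ in reverse linear-extension order, which on a ranked poset can be arranged rank by rank from top to bottom. Next, I would use the column decomposition of a ranked poset that underlies the Striker--Williams definition of promotion to write $\mathrm{Pro}$ as the product of all toggles taken column by column, from left to right. Both words use each toggle exactly once; they differ only in the order of the letters.

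The key tool is Remark \ref{rmk:commute}. Since covering relations in a ranked poset change rank by exactly one, any two elements of the same rank are incomparable and their toggles commute; likewise, because the Striker--Williams projection is designed so that covering relations do not preserve the column coordinate, any two toggles within the same column commute as well. I would then combine two types of word moves: adjacent swaps of commuting letters, which leave the product unchanged, and cyclic rotations of the whole word, which replace the product by a conjugate via $a_1 a_2 \cdots a_n = a_1 (a_2 \cdots a_n a_1) a_1^{-1}$. A carefully chosen sequence of such moves converts the rank-ordered product into the column-ordered product, and the cyclic shifts used along the way accumulate into the desired conjugator $w$.

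The main obstacle will be the combinatorial bookkeeping: each swap must be between genuinely commuting toggles, so I need to ensure that no forbidden pair is ever adjacent in the word under rearrangement. I would handle this by inducting on columns (or equivalently on ranks), peeling off one block at a time and using the intra-rank and intra-column commutations to slide its toggles into position after a cyclic shift, then recording that shift as a new factor of $w$. Once $w$ is assembled, equivariance of $\phi$ is immediate from its construction, completing the proof.
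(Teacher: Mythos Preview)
The paper does not actually prove this theorem; it is quoted from Striker and Williams \cite{ProRow} without proof, so there is no in-paper argument to compare against directly. That said, your approach is essentially the one used in the original source: Striker and Williams prove Theorem~\ref{thm:proroweq} precisely by exhibiting $\mathrm{Pro}$ and $\mathrm{Row}$ as conjugate toggle-group elements, and the present paper alludes to this when it later says (just before Theorem~\ref{thm:conjtoggle}) that ``Theorem 5.4 from \cite{ProRow} explicitly gives the bijection of Theorem 5.2 from \cite{ProRow}''---i.e., the equivariant bijection is realized by an explicit conjugator in the toggle group.

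One stylistic difference worth noting: rather than rearranging the two specific words for $\mathrm{Row}$ and $\mathrm{Pro}$ directly via ad hoc swaps and cyclic shifts as you outline, Striker and Williams isolate a clean lemma to the effect that if $g = r_1 r_2 \cdots r_m$ is a product of pairwise-commuting ``block'' toggles (each $r_i$ a product over an antichain), then any permutation of the $r_i$ yields a conjugate of $g$. Both $\mathrm{Row}$ (blocks $=$ ranks) and $\mathrm{Pro}$ (blocks $=$ columns) fit this template once one checks, as you do, that ranks and columns are antichains; the lemma then handles the bookkeeping you flag as the ``main obstacle'' uniformly, without an explicit induction on columns. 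Your plan would work, but packaging the commutation/cyclic-shift argument as that general lemma is cleaner and is what the cited paper does.
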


Additionally, in Theorem 5.4 of \cite{SW2012}, Striker and Williams explicitly gave a conjugating toggle element for this bijection. We will generalize this conjugating toggle element result in Theorem \ref{thm:conjtoggle}.

Striker and Williams found that in many cases, it was easier to determine the orbit sizes of Pro compared to Row. More specifically, for some classes of posets, the action of Pro on $J(P)$ is in equivariant bijection with a more easily understood rotation on another object.  As a result, in order to study the orbits of Row, it is often useful to study Pro and apply Theorem \ref{thm:proroweq}.

Dilks, Pechenik, and Striker further generalized the notion of promotion to higher dimensions. Rather than restricting to a two-dimensional lattice projection, they defined promotion for ranked posets with respect to an \emph{$n$-dimensional lattice projection} as toggling by sweeping through the poset with an affine hyperplane in a particular direction \cite{DPS2017}. We postpone the use of lattice projections until Section 6, choosing to present our main results using the natural embedding of the product of $n$ chains into $\mathbb{N}^n$. More specifically, we use the following definition.

\begin{definition}[\protect{\cite[Definition 3.14]{DPS2017}}]
	\label{def:hypetoggle}
	Let $P=[a_1] \times \dots \times [a_n]$ be the product of $n$ chains poset where we consider the elements in the standard $n$-dimensional embedding as vectors in $\mathbb{Z}_{> 0}^n$, and let $v \in \{\pm 1\}^n$.  Let $T_{v}^i$ be the product of toggles $t_x$ for all elements $x$ of $P$ that lie on the affine hyperplane $\langle x,v \rangle=i$.  If there is no such $x$, then this is the empty product, considered to be the identity.  Define \textit{promotion with respect to $v$} as the toggle product Pro$_{v}=\dots T_{v}^{-2} T_{v}^{-1} T_{v}^{0} T_{v}^{1} T_{v}^{2}\dots$
\end{definition}


By Remark \ref{rmk:commute}, toggles commute if there is no covering relation between their corresponding poset elements. This guarantees that $\mathrm{Pro}_v$ is well-defined.

\begin{remark}[\protect{\cite[Lemma 3.16]{DPS2017}}]
	Two elements of the poset that lie on the same affine hyperplane $\langle x,v \rangle=i$ cannot be part of a covering relation.
\end{remark}

Now that we established $\mathrm{Pro}_{v}$ and verified it is well-defined, we can relate it to the previously established Row.

\begin{proposition}[\protect{\cite[Remark 3.17, Proposition 3.18]{DPS2017}}]
	\label{prop:rowandpro}
	For a finite ranked poset $P$, $\mathrm{Pro}_{(1,1,\dots,1)}=\mathrm{Row}$. Additionally, $\mathrm{Pro}_{(-1,1)}$ is the two-dimensional promotion action $\mathrm{Pro}$.
\end{proposition}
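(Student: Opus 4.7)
The plan is to unpack Definition~\ref{def:hypetoggle} for each of the two direction vectors and match the resulting hyperplane toggle product with an independent description: the Cameron--Fon-der-Flaass top-to-bottom toggle formula for Row, and the Striker--Williams column-sweep definition of 2D Pro. The enabling fact in both cases will be that the hyperplanes $\langle x,v\rangle=i$ slice $P$ into antichains, which by Remark~\ref{rmk:commute} makes each $T_v^i$ a well-defined, order-independent product of commuting toggles.

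For $v=(1,1,\dots,1)$, I would first observe that $\langle x,v\rangle = x_1+\cdots+x_n$ is exactly the rank of $x$ in $P=[a_1]\times\cdots\times[a_n]$, so $T_v^i$ is the toggle product over all rank-$i$ elements. These form an antichain, so the factors of $T_v^i$ commute. Then, choosing any linear extension $\mathcal{L}$ of $P$ that lists higher-ranked elements before lower-ranked ones, the Cameron--Fon-der-Flaass theorem cited above expresses $\mathrm{Row}$ as $t_{\mathcal{L}^{-1}(1)}\cdots t_{\mathcal{L}^{-1}(N)}$. Regrouping this product by rank---again using within-rank commutation---turns it into $\cdots T_v^{-1}T_v^0 T_v^1 \cdots$, which is $\mathrm{Pro}_v$ by definition (the bi-infinite product is really finite because only finitely many hyperplanes meet $P$).

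For $v=(1,-1)$ on $P=[a]\times[b]$, $\langle x,v\rangle = x_1-x_2$ is constant along exactly the diagonals that Striker and Williams call the ``columns'' of the natural two-dimensional projection. Two elements on a common diagonal differ by $(k,k)$ for some $k\neq 0$, so their ranks differ by $2k$ and they cannot form a covering pair; hence $T_v^i$ is again a well-defined column toggle. By definition, the Striker--Williams $\mathrm{Pro}$ is the sweep of these column toggles through the columns in order, which matches $\mathrm{Pro}_v$ once one reconciles the indexing direction. The only real content of the proof in either part is this antichain observation; the main obstacle will just be a small bookkeeping check that the order in which $\mathrm{Pro}_v$ sweeps its hyperplanes agrees with the top-to-bottom and left-to-right conventions used by Cameron--Fon-der-Flaass and Striker--Williams respectively.
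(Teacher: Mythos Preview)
The paper does not actually prove this proposition; it is quoted from \cite{Resonance} and stated without argument, so there is nothing in the present paper to compare your approach against. Your sketch is the standard and correct one: for $v=(1,\dots,1)$ the hyperplanes $\langle x,v\rangle=i$ are precisely the ranks, so the sweep from large $i$ to small $i$ reproduces the Cameron--Fon-der-Flaass top-to-bottom toggle description of Row; for $v=(1,-1)$ the hyperplanes $x_1-x_2=i$ are the Striker--Williams columns, and the sweep direction matches their left-to-right convention.

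One small caveat worth tightening: the proposition is phrased for ``a finite ranked poset $P$'', but Definition~\ref{def:hypetoggle} as presented in this section is only for $P=[a_1]\times\cdots\times[a_n]$, and your argument (identifying $\langle x,(1,\dots,1)\rangle$ with rank) uses that embedding explicitly. The fully general statement really lives in the lattice-projection framework of Section~6; your argument transfers verbatim once $x$ is replaced by $\pi(x)$, but as written it only covers the product-of-chains case, which is what the paper actually needs in Sections~3--5.
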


The orbit structure of order ideals of certain posets under rowmotion and promotion has been well-studied. Another phenomenon, isolated by Propp and Roby, appears frequently among many of these posets and will be the subject of the next section. 

\section{The homomesy phenomenon and recombination}
In this section, we define homomesy and state known results in two dimensions. We will generalize these results to higher dimensions in Section 4 and to more general posets in Section 6.

\begin{definition}
	\label{def:homomesy}
	Given a finite set $S$, an action $\tau:S \rightarrow S$, and a statistic $f:S \rightarrow k$ where $k$ is a field of characteristic zero, we say that $(S, \tau, f)$ exhibits \textit{homomesy} if there exists $c \in k$ such that for every $\tau$-orbit $\orb$ 
	\begin{center}
		$\displaystyle\frac{1}{\# \orb} \sum_{x \in \orb} f(x) = c$
	\end{center}
	where $\# \orb$ denotes the number of elements in $\orb$. If such a $c$ exists, we will say the triple is \emph{$c$-mesic}.
\end{definition}

Homomesy results have been observed in many well-known combinatorial objects. To expound on one of these examples, Propp and Roby proved the following results on a product of chains.

\begin{theorem}[\protect{\cite[Theorem 19]{PR2015}}]
	\label{thm:axbPro}
	Let $f$ be the cardinality statistic. Then $(J([a] \times [b]), \mathrm{Pro}, f)$ is $ab/2$-mesic.
\end{theorem}

\begin{theorem}[\protect{\cite[Theorem 23]{PR2015}}]
	\label{thm:axbRow}
	Let $f$ be the cardinality statistic. Then $(J([a] \times [b]), \mathrm{Row}, f)$ is $ab/2$-mesic.
\end{theorem}
It is beneficial to study $J([a] \times [b])$ under Pro rather than Row, as $J([a] \times [b])$ under Pro is in equivariant bijection with an object that rotates. This fact makes the proof of Theorem \ref{thm:axbPro} fairly straightfoward. Propp and Roby also have a direct proof of Theorem \ref{thm:axbRow} in \cite{PR2015}; however, it is much more technical than in the promotion case.  Einstein and Propp found a more elegant way to prove Theorem \ref{thm:axbRow} in \cite{EP2014}, with further details in \cite{EP2013}, by using a technique they called \emph{recombination}. Their recombination technique gives an equivariant bijection between $J([a] \times [b])$ under Row and Pro. From this, we may start with an orbit from $J([a] \times [b])$ under Row and take sequential layers from order ideals to form a new orbit under Pro. We first introduce some useful notation.

\begin{definition}
	\label{def:vhat}
	Suppose $v=(v_1,v_2,\dots, v_n)\in \mathbb{Z}^n$. Given $\gamma \in [n]$, let \\$v^{\widehat{\gamma}}=(v_1,v_2,\dots,v_{\gamma -1}, v_{\gamma +1}, \dots, v_n)$.
\end{definition}

We define our layers in the following way.

\begin{definition}
	\label{def:layers}
	Fix $\gamma \in [n]$ and let $P=[a_1] \times \dots \times [a_n]$. Define the $j$th $\gamma$-layer of $I\in J(P)$ as \[L_{\gamma}^j(I)=\{(i_1,i_2,\ldots, i_n)\in I \ | \ i_{\gamma}=j\}.\] We will denote
	\[L_{\gamma}^j \coloneqq L_{\gamma}^j(P).\]
	Additionally, given $L_{\gamma}^j$ and $L_{\gamma}^j(I)$, define \[L_{\gamma}^j(I)^{\widehat{\gamma}}=\{(i_1,i_2,\ldots, i_n)^{\widehat{\gamma}} \ | \ (i_1,i_2,\ldots, i_n)\in L_{\gamma}^j(I)\}\] and
	\[(L_{\gamma}^{j})^{\widehat{\gamma}}\coloneqq L_{\gamma}^j(P)^{\widehat{\gamma}}\]
\end{definition}

$\gamma$ tells us the direction of our layers while $j$ signifies which of the layers we are taking in that direction.

When $n=2$, Einstein and Propp referred to each $L_{1}^j$ as a negative fiber of $P$; we use the notation $L_{\gamma}^j$ and $L_{\gamma}^j(I)$ as it more naturally describes our layers when we generalize to higher dimensions. Furthermore, we define $(L_{\gamma}^j)^{\widehat{\gamma}}$ and $L_{\gamma}^j(I)^{\widehat{\gamma}}$, which removes the $j$th coordinate, as it will be useful to view our layers in the $(n-1)$-dimensional setting.

Using the idea of layers, Einstein and Propp defined the concept of recombination and proved the following proposition, which we restate in the above notation. See Figure \ref{fig:3x3recomb} for an example. 

\begin{definition}
	\label{def:2drecomb}
	Let $I \in J([a] \times [b])$. Define the \emph{recombination} of $I$ as $\Delta I=\bigcupdot_{j}L_{1}^j(\row^{j-1}(I))$.
\end{definition}

\begin{proposition}[\protect{\cite[Theorem 12]{EP2013}}]
	\label{prop:2drecomb}
	Let $I \in J([a] \times [b])$. Then $\mathrm{Pro}(\Delta I)=\bigcupdot_{j}L_{1}^j(\row^{j}(I))=\Delta(\row(I))$.
\end{proposition}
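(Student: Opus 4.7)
The first step is to recognize that the proposition is equivalent to the intertwining identity $\mathrm{Pro} \circ \Delta = \Delta \circ \mathrm{Row}$, since $\bigcup_{j} L_{1}^{j}(\mathrm{Row}^{j}(I)) = \Delta(\mathrm{Row}(I))$ by unwinding Definition \ref{def:2drecomb}. So the plan is to show that $\Delta$ semiconjugates rowmotion to promotion. Before the core argument I would verify that $\Delta I$ is itself an order ideal; since each slice $L_{1}^{j}(\mathrm{Row}^{j-1}(I))$ is downward-closed within $L_{1}^{j}$, the one remaining task is $x_{1}$-downward closure, which reduces to the claim that $(j,k) \in \mathrm{Row}^{j-1}(I)$ implies $(j-1,k) \in \mathrm{Row}^{j-2}(I)$. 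This follows by unpacking rowmotion: any minimal element of $P \setminus \mathrm{Row}^{j-2}(I)$ lying above $(j,k)$ has first coordinate $\ge j$, so $(j-1,k)$ is strictly below it, and downward-closure of $\mathrm{Row}^{j-2}(I)$ forces $(j-1,k) \in \mathrm{Row}^{j-2}(I)$.

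The core of the plan is to analyze promotion via its antidiagonal decomposition. Writing $\mathrm{Pro} = \prod_{c} T^{c}$ using Definition \ref{def:hypetoggle} with $v = (1,-1)$, the key combinatorial observation is that the toggles of Pro which touch layer $L_{1}^{j}$ occur at times $c = j-b, j-b+1, \ldots, j-1$, and in that order they toggle the elements $(j, b), (j, b-1), \ldots, (j, 1)$, i.e., a top-down sweep through the chain $L_{1}^{j}$. Moreover, for any element $(j, k)$ toggled at time $c = j-k$, its two lower Hasse neighbors $(j-1, k)$ and $(j, k-1)$ are toggled at times $c-1$ and $c+1$ respectively, while its two upper Hasse neighbors $(j, k+1)$ and $(j+1, k)$ are toggled at times $c-1$ and $c+1$ respectively; so at the moment of toggling $(j, k)$, exactly one lower neighbor and one upper neighbor have already been updated, while the other lower and upper neighbor remain at their $\Delta I$ values.

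The proof is then a double induction: an outer induction on $j$ from $1$ to $a$ with hypothesis ``after the promotion toggles of layers $\le j$ have been applied, the $i$th layer of the current ideal equals $L_{1}^{i}(\mathrm{Row}^{i}(I))$ for all $i \le j$'', and an inner induction on $k$ from $b$ down to $1$ within the layer-$j$ sweep. Setting $j = a$ gives the proposition. The crucial local verification is that at time $c = j-k$ the four neighbor states seen by the toggle of $(j, k)$ are the values of $(j-1,k)$ and $(j,k-1)$ in $\mathrm{Row}^{j-1}(I)$ (the first by outer induction, the second because $(j, k-1)$ has not yet been touched and $\Delta I$ places it at its $\mathrm{Row}^{j-1}(I)$-value) together with the values of $(j,k+1)$ and $(j+1,k)$ in $\mathrm{Row}^{j}(I)$ (the first by inner induction, the second because $(j+1, k)$ has not yet been touched and $\Delta I$ places it at its $\mathrm{Row}^{j}(I)$-value, via $L_{1}^{j+1}(\mathrm{Row}^{j}(I)) \subseteq \Delta I$). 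These are precisely the neighbor states that a top-down rowmotion sweep of $\mathrm{Row}^{j-1}(I) \mapsto \mathrm{Row}^{j}(I)$ sees at $(j, k)$, so the toggle outcome is the same.

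The main obstacle is this careful bookkeeping. Tracking exactly which neighbors of $(j, k)$ have been updated by time $c = j - k$, and matching the ``updated'' and ``not yet updated'' values to the right rowmotion iterate of $I$, is the technical heart of the proof; once that bookkeeping is laid out, the local equivalence with the rowmotion toggle is immediate from the three-case definition of the toggle operator, and the double induction closes to yield $\mathrm{Pro}(\Delta I) = \Delta(\mathrm{Row}(I))$.
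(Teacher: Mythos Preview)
Your argument is essentially the paper's own proof of the generalization (Theorem~\ref{thm:genrecomb}) specialized to two dimensions: match the toggle of $(j,k)$ inside $\mathrm{Pro}(\Delta I)$ against the toggle of $(j,k)$ inside $\mathrm{Row}(\mathrm{Row}^{j-1}(I))$ by checking that the four neighbor states coincide, and conclude layer by layer. Your order-ideal check is the content of Lemma~\ref{lemma:orderideal}, and your neighbor bookkeeping is exactly the case analysis in the proof of Theorem~\ref{thm:genrecomb}.

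One structural wrinkle is worth tightening. Your outer induction hypothesis ``after the promotion toggles of layers $\le j$ have been applied'' presupposes that $\mathrm{Pro}$ has been commuted into layer-by-layer form. In the raw antidiagonal sweep you describe, the layers are interleaved: for instance $(j+1,b)$ sits at time $c=j+1-b$, which for $b>2$ is earlier than $(j,1)$ at time $c=j-1$, so there is no moment in the sweep at which ``layers $\le j$ are done and layer $j+1$ has not started.'' The paper resolves this by first invoking Proposition~\ref{prop:2dcommute} (the 2D case of Theorem~\ref{thm:ndcommute}) to rewrite $\mathrm{Pro}$ as a product of layer-sweeps, after which your double induction goes through verbatim. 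Alternatively, you could keep the antidiagonal sweep and replace the double induction by a single induction on the time index $c$; your local verification already shows that both $(j-1,k)$ and $(j,k+1)$, the two neighbors you need from the inductive hypothesis, live at time $c-1$, so that version closes cleanly as well.

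A cosmetic point: the sweep you describe (low $c$ first, hitting $(j,b)$ before $(j,1)$) is $\mathrm{Pro}_{(-1,1)}$ under the convention of Definition~\ref{def:hypetoggle}, not $\mathrm{Pro}_{(1,-1)}$. This does not affect the mathematics, and the paper's own notation is not entirely consistent on this point, but it is worth aligning if you write it up.
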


\begin{figure}[htbp]
	\centering
	\begin{subfigure}{.45\textwidth}
		\centering
		\includegraphics[width=1\linewidth]{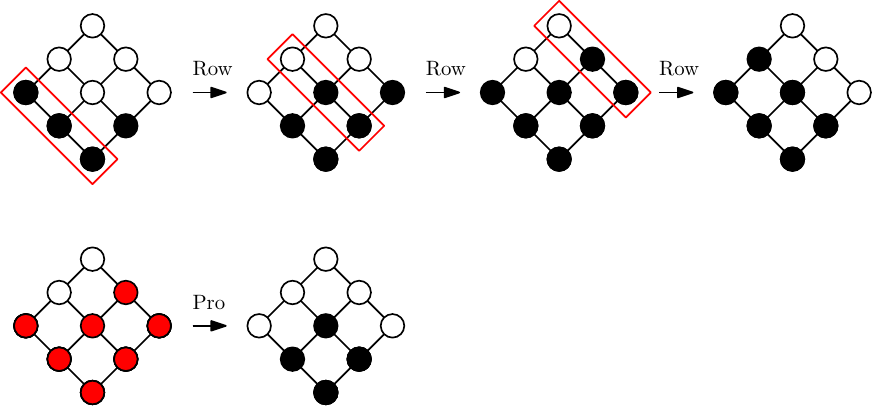}
		\caption{From an orbit of Row, we use $L_{1}^1(I), L_{1}^2(\mathrm{Row}(I))$, and $L_{1}^3(\mathrm{Row}^{2}(I))$ to form a new order ideal, denoted here in red.}
		\label{fig:3x3recomb1}
	\end{subfigure}\hfill
	\begin{subfigure}{.45\textwidth}
		\centering
		\includegraphics[width=1\linewidth]{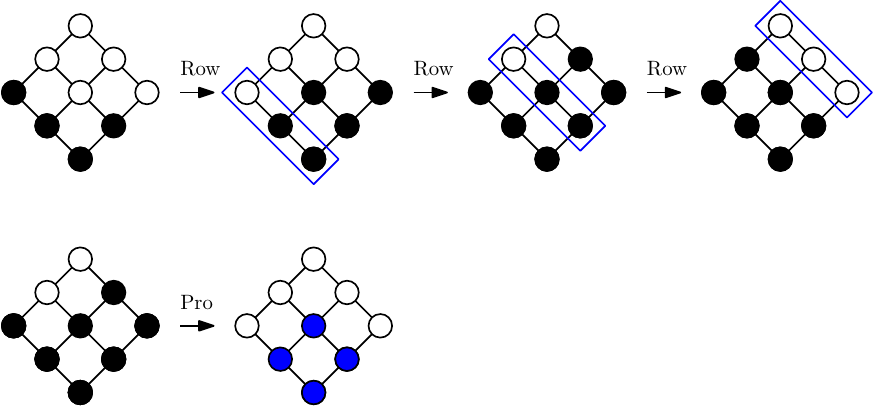}
		\caption{From the same orbit of Row, we use $L_{1}^1(\mathrm{Row}(I)), L_{1}^2(\mathrm{Row}^2(I))$, and $L_{1}^3(\mathrm{Row}^3(I))$ to form a new order ideal, denoted here in blue.}
		\label{fig:3x3recomb2}
	\end{subfigure}
	\caption{Performing Pro on the red order ideal results in the blue order ideal.}
	\label{fig:3x3recomb}
\end{figure}

The idea behind recombination is the following: we take a single layer from each order ideal in a sequence of order ideals from a rowmotion orbit to form the layers of a new order ideal. Proposition \ref{prop:2drecomb} tells us that if we apply promotion to this new order ideal, the result is the same as if we move one step forward in the rowmotion orbit and apply recombination again. In other words, recombination gives an equivariant bijection between $J([a] \times [b])$ under Pro and $J([a] \times [b])$ under Row.

In Theorem \ref{thm:genrecomb}, we generalize this notion to higher dimensional products of chains. Before doing so, however, we observe an important property of Row and Pro and how their toggles commute in the $[a] \times [b]$ case. To state this we introduce an additional definition, which will also prove useful for discussing commuting toggles in $n$-dimensions.

\begin{definition}
	Let $P=[a_1] \times \dots \times [a_n]$, $v \in \{\pm 1\}^n$, and $\gamma \in [n]$. Define $T^{j}_{\mathrm{Pro}_{v^{\widehat{\gamma}}}}$ as the toggle product of Pro$_{v^{\widehat{\gamma}}}$ on $(L_{\gamma}^j)^{\widehat{\gamma}}$.
\end{definition}

With this notation, given an $n$-dimensional vector $v$, we define a product of toggles on an $(n-1)$-dimensional product of chains with the order of toggles given by Pro$_{v^{\widehat{\gamma}}}$. The following proposition shows that for $[a] \times [b]$, we can express Row and Pro using these toggle products. In Theorem \ref{thm:ndcommute}, we will show this holds more generally for $[a_1] \times \dots \times [a_n]$ and $\mathrm{Pro}_v$.

\begin{proposition}[\protect{\cite[Section 8]{EP2013}} \protect{\cite[Theorem 5.4]{SW2012}}]
	\label{prop:2dcommute}
	Let \new{$P=[a] \times [b]$}. $\row=\mathrm{Pro_{(1,1)}}=\prod_{j=1}^{a} T^{j}_{\mathrm{Pro}_{(1,1)^{\widehat{1}}}}$ and $\mathrm{Pro}=\prod_{j=1}^{a} T^{a+1-j}_{\mathrm{Pro}_{(-1,1)^{\widehat{1}}}}$.
\end{proposition}

In other words, we can commute the toggles of Row so we toggle $L_{1}^{a}$, followed by $L_{1}^{a-1}$, and so on, toggling each layer from top to bottom. For example, in Figure \ref{fig:commute1}, we can commute the red toggle with both blue toggles, as the red element does not have a covering relation with either blue element. Therefore, when performing Row we can toggle both blue elements before the red element, and hence all of $L_{1}^{3}$ before the red element. Similar reasoning applies for each $L_{1}^{j}$, and as a result we can perform Row by toggling in the order denoted in Figure \ref{fig:commute2}, where layer 3 is first, layer 2 is second, and layer 1 third. Additionally, the toggle order in each layer is denoted with an arrow. In other words, $\row=T^{1}_{\mathrm{Pro}_{(1,1)^{\widehat{1}}}}T^{2}_{\mathrm{Pro}_{(1,1)^{\widehat{1}}}}T^{3}_{\mathrm{Pro}_{(1,1)^{\widehat{1}}}}$. Note that Pro is similar, except we would toggle layer 1 first, then layer 2: $\mathrm{Pro}=T^{3}_{\mathrm{Pro}_{(-1,1)^{\widehat{1}}}}T^{2}_{\mathrm{Pro}_{(-1,1)^{\widehat{1}}}}T^{1}_{\mathrm{Pro}_{(-1,1)^{\widehat{1}}}}$. The toggle order of each layer is identical for both as $\mathrm{Pro}_{(1,1)^{\widehat{1}}}=\mathrm{Pro}_{(-1,1)^{\widehat{1}}}=\mathrm{Pro}_{(1)}$.

\begin{figure}[htbp]
	\centering
	\begin{subfigure}{.45\textwidth}
		\centering
		\includegraphics[width=.4\linewidth]{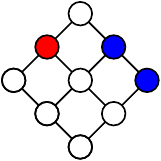}
		\caption{We can commute the toggle of either blue element with the red element, as there is no covering relation between them.}
		\label{fig:commute1}
	\end{subfigure}\hfill
	\begin{subfigure}{.45\textwidth}
		\centering
		\includegraphics[width=.5\linewidth]{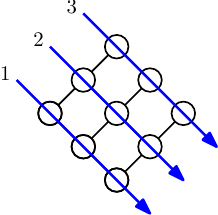}
		\caption{We toggle layer 3, then layer 2, then layer 1, with arrows denoting toggle order in each layer. This toggle order is equivalent to Row by commuting toggles.}
		\label{fig:commute2}
	\end{subfigure}
	\caption{}
	\label{fig:commute}
\end{figure}

\section{Homomesy on $J([2] \times [b] \times [c])$ and higher dimensional recombination}
Having explored known homomesy results in Section 3, we state our first main result, a homomesy result on order ideals of $[2] \times [b] \times [c]$ under promotion with cardinality statistic (Theorem \ref{thm:mainhom}). This is a generalization of two results of Propp and Roby: order ideals of $[a] \times [b]$ under promotion and rowmotion with cardinality statistic exhibit homomesy (Theorems \ref{thm:axbPro} and \ref{thm:axbRow}). Additionally, we use symmetry to show homomesy results on order ideals of $[a] \times [2] \times [c]$ and $[a] \times [b] \times [2]$ under promotion with cardinality statistic (Corollary \ref{cor:maincor1}). We also generalize the definition of recombination on a product of two chains (Definition \ref{def:2drecomb}) to a product of $n$ chains (Definition \ref{def:ndimrecomb}). Our second main result generalizes the connection between rowmotion and promotion under recombination from a product of two chains (Proposition \ref{prop:2drecomb}) to a product of $n$ chains (Theorem \ref{thm:genrecomb}). We conclude this section showing that order ideals of arbitrary products of three chains under promotion with cardinality statistic do not exhibit homomesy (Proposition \ref{prop:334}), nor do order ideals of an arbitrary product of $n$ two element chains (\ref{prop:22222}). These results show that our main homomesy result (Theorem \ref{thm:mainhom}) does not generalize further. In the next section, we discuss a partial generalization.

\begin{theorem}
	\label{thm:mainhom}
	Let $f$ be the cardinality statistic and $v \in \{\pm 1\}^n$.  The triple $(\prodchainstwo, \mathrm{Pro}_v, f)$ is $bc$-mesic.
\end{theorem}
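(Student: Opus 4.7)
The plan is to apply the generalized recombination of Theorem \ref{thm:genrecomb} in two stages: first, to transfer the homomesy statement among the eight choices of $v \in \{\pm 1\}^3$ so that it suffices to prove a single case, and second, to reduce that case to the $2$-dimensional homomesy Theorem \ref{thm:axbPro} applied to the two $[2]$-layers of each ideal.

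For the first stage I would extract from Theorem \ref{thm:genrecomb} an orbit-sum identity: given the equivariance $\mathrm{Pro}_{v'} \circ \Delta_\gamma^v = \Delta_\gamma^v \circ \mathrm{Pro}_v$ (where $v'$ is the sign flip of $v$ in its $\gamma$-coordinate) and the layered formula $\Delta_\gamma^v I = \bigcup_j L_\gamma^j(\mathrm{Pro}_v^{j-1}(I))$, the total cardinality sums $\sum_k |\mathrm{Pro}_{v'}^k(\Delta_\gamma^v I)|$ and $\sum_k |\mathrm{Pro}_v^k(I)|$ over their common orbit coincide by a cyclic reindexing of the layer contributions (exactly the argument that underlies Proposition \ref{prop:2drecomb} in two dimensions). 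Hence $(\prodchainstwo, \mathrm{Pro}_v, f)$ is $c$-mesic if and only if $(\prodchainstwo, \mathrm{Pro}_{v'}, f)$ is $c$-mesic, and since any two elements of $\{\pm 1\}^3$ are connected by a sequence of single-coordinate sign flips, I may verify $c = ab$ for one convenient $v_0$ and conclude the statement for all $v$.

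Fixing $v_0$, I apply recombination once more with $\gamma = 1$ and split each ideal as $|I| = |L_1^1(I)^*| + |L_1^2(I)^*|$, with both $[2]$-layers order ideals in $J([a] \times [b])$. The orbit-sum identity applied to each layer re-expresses $\sum_k |L_1^j(\mathrm{Pro}_{v_0}^k(I))^*|$ as a sum of cardinalities along a union of $\mathrm{Pro}_{v_0^*}$-orbits in $J([a] \times [b])$, where $v_0^* = ((v_0)_2, (v_0)_3)$. By Theorem \ref{thm:axbPro}, each such $2$-dimensional orbit is $(ab/2)$-mesic, so each layer contributes $ab/2$ per ideal on average across the $3$-dimensional orbit. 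Adding the two layer contributions yields total orbit-average $ab/2 + ab/2 = ab$, as required.

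The main obstacle I anticipate is the careful orbit bookkeeping in the $3$D-to-$2$D reduction. The two layers $L_1^1(I)^*$ and $L_1^2(I)^*$ are coupled by the order-ideal constraint $L_1^2(I)^* \subseteq L_1^1(I)^*$, so the sequences $\{L_1^j(\mathrm{Pro}_{v_0}^k(I))^*\}_k$ need not each trace a single $\mathrm{Pro}_{v_0^*}$-orbit; the content of the recombination identity is precisely that they aggregate, across the two layers, into a collection whose total cardinality matches a union of well-behaved $2$D orbits to which Theorem \ref{thm:axbPro} applies uniformly. The hypothesis that one chain has length exactly two is essential here, and Remarks \ref{remark:334} and \ref{remark:22222} will presumably illustrate that this aggregation breaks when this hypothesis is relaxed.
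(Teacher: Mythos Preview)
Your first stage is correct and matches the paper's own strategy: recombination (Theorem~\ref{thm:genrecomb}) lets you transfer the homomesy statement between any two $v$'s differing by a single sign flip, so it suffices to establish the result for one $v$. The paper argues exactly this way (using Corollaries~\ref{cor:rowrecomb}--\ref{cor:thirdrecomb} together with the observation that $\mathrm{Pro}_{-v}$ reverses $\mathrm{Pro}_v$-orbits).

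The second stage, however, has a genuine gap. Your hope is that the two $[2]$-layer sequences $\{L_1^j(\mathrm{Pro}_{v_0}^k(I))^*\}_k$ can be reassembled into unions of honest $\mathrm{Pro}_{v_0^*}$-orbits in $J([a]\times[b])$, so that Theorem~\ref{thm:axbPro} applies and each layer contributes $ab/2$ on average. This is false already for $a=b=1$: the single Row-orbit on $J([2]\times[1]\times[1])$ has length $3$, with layer-$1$ cardinality sum $2$ and layer-$2$ cardinality sum $1$, giving averages $2/3$ and $1/3$ rather than $1/2$ each. The layers are genuinely coupled through the covering relations between them, and Theorem~\ref{thm:ndcommute} only says that $\mathrm{Pro}_v$ factors as a product of layer-wise toggle products \emph{in the ambient $3$D poset}, not that each layer evolves by an independent $2$D promotion. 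Recombination relates one $3$D promotion orbit to another $3$D promotion orbit; it does not reduce anything to $2$D orbits, so the ``aggregation'' you invoke is not supplied by Theorem~\ref{thm:genrecomb}.

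The paper closes this gap by a different route entirely: it uses the bijection $\Psi$ of Theorem~\ref{thm:psibij} to increasing tableaux and then invokes the Bloom--Pechenik--Saracino homomesy for $2$-row increasing tableaux under $K$-promotion (Theorem~\ref{thm:tabhom}). That external result is the substantive input replacing your attempted $2$D reduction; the special role of the length-$2$ chain enters because $\Psi$ lands in $2$-row tableaux, where Theorem~\ref{thm:tabhom} applies. Once homomesy is established for $\mathrm{Pro}_{(1,1,-1)}$ via tableaux, the paper propagates to all other $v$ by recombination and orbit reversal, and computes $c=ab$ from the filter/ideal complement symmetry.
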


In order to prove Theorem \ref{thm:mainhom}, we will define the notion of recombination for a product of chains in full generality.

\begin{definition}
	\label{def:ndimrecomb}
	Let $P=[a_1] \times \dots \times [a_n]$, $v \in \{\pm 1\}^n$, and $I \in J(P)$. Define $\recomb{v}{\gamma}=\bigcupdot_{j}L_{\gamma}^j(\mathrm{Pro}_v^{j-1}(I))$ where $\gamma \in [n]$. We will call $\recomb{v}{\gamma}$ the \emph{$(v,\gamma)-$recombination} of $I$. When context is clear, we will suppress the $(v,\gamma)$.
\end{definition}

The idea behind \prorecomb{v}{\gamma} is the same as in the two-dimensional case: we take sequentially one layer from each order ideal from a promotion orbit to form the layers of a new order ideal. See Figure \ref{fig:2x3x2recomb} for an example. In addition to generalizing recombination to $n$ dimensions, we also generalize Proposition \ref{prop:2dcommute} to $n$ dimensions.

\begin{theorem}
	\label{thm:ndcommute}
	Let \new{$P=[a_1] \times \dots \times [a_n]$}, $v \in \{\pm 1\}^n$, and $\gamma \in [n]$. Then $\mathrm{Pro}_{v}=\prod_{j=1}^{a_\gamma} T^{\alpha}_{\mathrm{Pro}_{v^{\widehat{\gamma}}}}$ where
	\[
	\alpha =
	\begin{cases} 
	j   \hfill & \text{if } v_\gamma=1 \\
	a_\gamma+1-j \hfill & \text{if } v_\gamma=-1. \\
	\end{cases}
	\]
\end{theorem}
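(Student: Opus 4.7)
The plan is to observe that both $\mathrm{Pro}_v$ and $\prod_{j=1}^{a_\gamma} T^{\alpha}_{\mathrm{Pro}_{v^*}}$ are toggle products in which each $t_x$ for $x \in P$ appears exactly once, and then to show that the two orderings differ only by transpositions of non-covering toggles. The left-hand side sequences toggles by the value of $\langle x, v \rangle$, while the right-hand side sequences them first by $\gamma$-layer (layer $a_\gamma$ processed first when $v_\gamma = 1$, layer $1$ first when $v_\gamma = -1$), and then within each layer by $\mathrm{Pro}_{v^*}$ acting on $(L_\gamma^j)^*$. The well-definedness of each $T_v^i$ already handles ambiguity within a single affine hyperplane.

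The heart of the argument is the following claim: for every covering pair $x, x' \in P$, the toggles $t_x$ and $t_{x'}$ appear in the same relative order on both sides. Since $P$ is a product of chains, any cover has the form $x - x' = e_k$ for some standard basis vector $e_k$. Writing $\langle x, v\rangle = v_\gamma x_\gamma + \langle y, v^*\rangle$ where $y$ is the $v^*$-projection of $x$, two cases arise. When $k \neq \gamma$, both $x$ and $x'$ lie in the same $\gamma$-layer, and both sides order the pair by $\mathrm{Pro}_{v^*}$ on the projected layer, i.e.\ by $\langle y, v^*\rangle$, which agrees with the $\mathrm{Pro}_v$-order by the decomposition above. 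When $k = \gamma$, the two elements share a common $v^*$-projection but differ by one in the $\gamma$-coordinate, so the choice $\alpha = a_\gamma + 1 - j$ when $v_\gamma = 1$ and $\alpha = j$ when $v_\gamma = -1$ ensures that the layer with the larger value of $\langle \cdot, v\rangle$ is processed first on the right-hand side, matching $\mathrm{Pro}_v$.

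Once covering pairs are shown to be ordered consistently, every remaining discrepancy between the two sequences involves a pair of non-covering elements. By Remark \ref{rmk:commute} such toggles commute, and a bubble-sort style argument then converts the left-hand sequence into the right-hand sequence via a finite succession of legal transpositions, proving the identity.

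The main obstacle is the sign-case bookkeeping for $v_\gamma = \pm 1$: one must confirm that the sweep direction prescribed by $\alpha(j)$ really matches $\mathrm{Pro}_v$ on $\gamma$-edges. This becomes transparent after noting that $v_\gamma x_\gamma$ is increasing in $x_\gamma$ when $v_\gamma = 1$ and decreasing when $v_\gamma = -1$, so the layer carrying the maximum value of $\langle \cdot, v\rangle$ is $L_\gamma^{a_\gamma}$ in the former case and $L_\gamma^1$ in the latter, which is precisely the first layer of the outer product in each case.
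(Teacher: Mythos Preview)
Your proof is correct and follows essentially the same approach as the paper: both arguments reduce to checking that covering pairs are toggled in the same relative order on the two sides (using that non-covering toggles commute), split into the cases $k=\gamma$ (different $\gamma$-layers) and $k\neq\gamma$ (same $\gamma$-layer), and use the decomposition $\langle x,v\rangle = v_\gamma x_\gamma + \langle x^*,v^*\rangle$ to verify each case. Your write-up makes the bubble-sort reduction more explicit than the paper does, but the mathematical content is the same.
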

\begin{proof}
	Suppose $x:=(x_1,\dots,x_n), y:=(y_1,\dots,y_n) \in \new{P}$ with $x \in \new{L_{\gamma}^j}$ and $y \in \new{L_{\gamma}^k}$ for some $j$ and $k$. We want to show that $x$ and $y$ are toggled in the same order in Pro$_{v}$ and $\prod_{j=1}^{a_\gamma} T^{\alpha}_{\mathrm{Pro}_{v^{\widehat{\gamma}}}}$.
	
	\textbf{Case $j\ne k$:} Without loss of generality, $j>k$.  Furthermore, we can assume $x_\gamma = y_\gamma+1$ and $x_i=y_i$ for $i \ne \gamma$. If this were not the case, $x$ and $y$ could not have a covering relation and we could commute the toggles.
	
	If $v_\gamma=1:$ In $\prod_{j=1}^{a_\gamma} T^{\alpha}_{\mathrm{Pro}_{v^{\widehat{\gamma}}}}$, $x$ is toggled before $y$ by definition. Additionally, \[\langle x,v \rangle = v_1 x_1 + \dots + v_\gamma x_\gamma + \dots + v_n x_n > v_1 y_1 + \dots + v_\gamma y_\gamma + \dots + v_n y_n = \langle y,v \rangle\] and so $x$ is toggled before $y$ in Pro$_{v}$.
	
	If $v_\gamma=-1:$ In $\prod_{j=1}^{a_\gamma} T^{\alpha}_{\mathrm{Pro}_{v^{\widehat{\gamma}}}}$, $y$ is toggled before $x$ by definition. Additionally, \[\langle x,v \rangle = v_1 x_1 + \dots + v_\gamma x_\gamma + \dots + v_n x_n < v_1 y_1 + \dots + v_\gamma y_\gamma + \dots + v_n y_n = \langle y,v \rangle\] and so $y$ is toggled before $x$ in Pro$_{v}$.
	
	\textbf{Case $j=k$:} In other words, $x_\gamma=y_\gamma$. Therefore,
	\begin{align*}
	\langle x,v \rangle > \langle y,v \rangle  \iff & v_1 x_1 + \dots + v_\gamma x_\gamma + \dots + v_n x_n > v_1 y_1 + \dots + v_\gamma y_\gamma + \dots + v_n y_n \\
	\iff & v_1 x_1 + \dots + v_{\gamma-1} x_{\gamma-1} + v_{\gamma+1} x_{\gamma+1} + \dots + v_n x_n > \\ & v_1 y_1 + \dots + v_{\gamma-1} y_{\gamma-1} + v_{\gamma+1} y_{\gamma+1} + \dots + v_n y_n \\
	\iff & \langle x^{\widehat{\gamma}},v^{\widehat{\gamma}} \rangle > \langle y^{\widehat{\gamma}},v^{\widehat{\gamma}} \rangle
	\end{align*}
	Therefore, $x$ can be toggled before $y$ in Pro$_v$ if and only if $x$ can be toggled before $y$ in $\prod_{j=1}^{a_\gamma} T^{\alpha}_{\mathrm{Pro}_{v^{\widehat{\gamma}}}}$.
\end{proof}

In other words, if we want to apply Pro$_v$, we can commute our toggles to toggle by layers of the form $L_{\gamma}^j$ instead of using the toggle order given in Definition \ref{def:hypetoggle}. More specifically, if $v_\gamma=1$, we toggle in the order of $L_{\gamma}^{a_\gamma}, L_{\gamma}^{a_\gamma-1},\dots,L_{\gamma}^{1}$. If $v_\gamma=-1$, we toggle in the order of $L_{\gamma}^{1}, L_{\gamma}^{2},\dots,L_{\gamma}^{a_\gamma}$. 

Now that we have established $n$-dimensional recombination and toggle commutation, we determine conditions under which \prorecomb{v}{\gamma} results in an order ideal.

\begin{figure}[htbp]
	\centering
	\begin{subfigure}{.45\textwidth}
		\centering
		\includegraphics[width=1\linewidth]{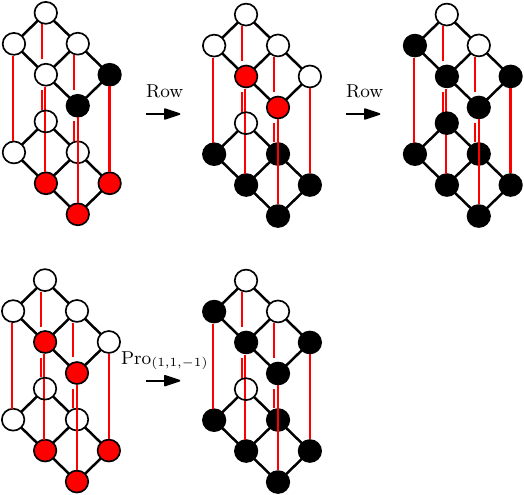}
		\caption{From an orbit of Row, we use $L_{3}^1(I)$ and $L_{3}^2(\mathrm{Row}(I))$ to form a new order ideal, denoted here in red.}
		\label{fig:2x3x2recomb1}
	\end{subfigure}\hfill
	\begin{subfigure}{.45\textwidth}
		\centering
		\includegraphics[width=1\linewidth]{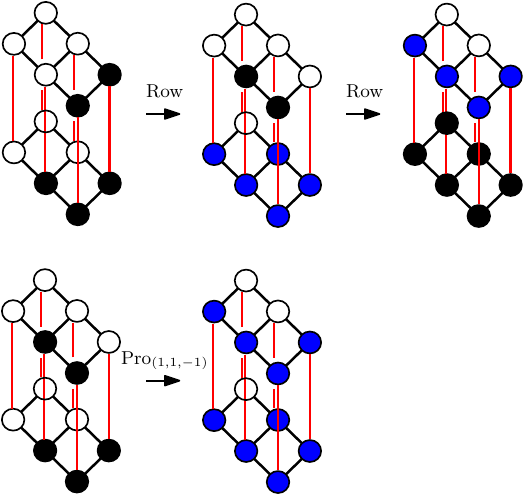}
		\caption{From the same orbit of Row, we use $L_{3}^1(\mathrm{Row}(I))$ and $L_{3}^2(\mathrm{Row}^2(I))$ to form a new order ideal, denoted here in blue.}
		\label{fig:2x3x2recomb2}
	\end{subfigure}
	\caption{Performing Pro$_{(1,1,-1)}$ on the red order ideal results in the blue order ideal.}
	\label{fig:2x3x2recomb}
\end{figure}

\begin{lemma}
	\label{lemma:orderideal}
	Let $I\in J([a_1] \times \dots \times [a_n])$. Suppose we have $v \in \{\pm 1\}^n$ and $\gamma$ such that $v_\gamma=1$. Then $\recomb{v}{\gamma}$ is an order ideal of P.
\end{lemma}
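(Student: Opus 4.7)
The plan is to reduce the lemma to a single sub-claim about consecutive iterates of $\mathrm{Pro}_v$. Note that an order ideal of $P = [a_1] \times \cdots \times [a_n]$ corresponds to a nested chain $L_\gamma^1(I)^* \supseteq L_\gamma^2(I)^* \supseteq \cdots \supseteq L_\gamma^{a_\gamma}(I)^*$ of order ideals of the subposet $(L_\gamma^j)^* \cong [a_1] \times \cdots \widehat{[a_\gamma]} \cdots \times [a_n]$; each slice is an order ideal because $I$ is, and the nesting follows from $I$ being closed under decreasing the $\gamma$-coordinate. Writing $I_j := \mathrm{Pro}_v^{j-1}(I)$, the set $\recomb{v}{\gamma} = \bigcup_j L_\gamma^j(I_j)$ has slice sequence $L_\gamma^j(I_j)^*$, each of which is automatically an order ideal of $(L_\gamma^j)^*$. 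Therefore the lemma reduces to the nesting $L_\gamma^j(I_j)^* \subseteq L_\gamma^{j-1}(I_{j-1})^*$ for every $j \geq 2$, which unpacks to the following sub-claim: whenever $x \in I_j$ has $x_\gamma = j$, the element $x - e_\gamma$ lies in $I_{j-1}$.

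To prove the sub-claim I would apply Theorem \ref{thm:ndcommute}. Because $v_\gamma = 1$, the toggles comprising $\mathrm{Pro}_v$ can be re-ordered so that layers are toggled in the sequence $L_\gamma^{a_\gamma}, L_\gamma^{a_\gamma - 1}, \dots, L_\gamma^1$. Focus on the instant at which the individual toggle $t_x$ fires while computing $I_j = \mathrm{Pro}_v(I_{j-1})$; by the rearrangement, this instant lies inside the toggle of layer $L_\gamma^j$, and at that moment no toggle in any lower layer has yet occurred, so in particular layer $L_\gamma^{j-1}$ still carries its $I_{j-1}$-state. Moreover, no earlier toggle has altered $x$ itself, so $x$'s state just before $t_x$ equals its state in $I_{j-1}$ and its state just after $t_x$ equals its state in $I_j$. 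If $x \in I_{j-1}$, then $x - e_\gamma \leq x$ forces $x - e_\gamma \in I_{j-1}$ by the order-ideal property. Otherwise $x \notin I_{j-1}$ but $x \in I_j$, so $t_x$ must have inserted $x$; by the toggle definition this requires every element strictly below $x$ to be in the current state at that moment, in particular $x - e_\gamma \in L_\gamma^{j-1}$, whose state has not been disturbed, so again $x - e_\gamma \in I_{j-1}$.

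From the sub-claim, downward-closure of $\recomb{v}{\gamma}$ follows quickly. Given $x \in L_\gamma^j(I_j)$ and $y \leq x$ in $P$ with $y_\gamma = k \leq j$, iterating the sub-claim $(j-k)$ times yields $x - (j-k)e_\gamma \in I_k$, and then the order-ideal property of $I_k$ inside layer $L_\gamma^k$ gives $y \in L_\gamma^k(I_k) \subseteq \recomb{v}{\gamma}$. The main obstacle is the temporal bookkeeping in the sub-claim: one must be sure that, at the moment $t_x$ fires, layer $L_\gamma^{j-1}$ has not yet been touched. Theorem \ref{thm:ndcommute} together with the hypothesis $v_\gamma = 1$ is precisely what licenses the reordering that freezes the lower layer while $t_x$ is applied. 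Without that control, the lower layer would carry a mixture of $I_{j-1}$ and partial $\mathrm{Pro}_v$-toggling and the comparison would break.
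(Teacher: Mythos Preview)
Your proposal is correct and follows essentially the same route as the paper: both reduce to showing that if $x \in \mathrm{Pro}_v^{j-1}(I)$ with $x_\gamma = j$ then $x - e_\gamma \in \mathrm{Pro}_v^{j-2}(I)$, and both invoke Theorem~\ref{thm:ndcommute} (the layer-by-layer reordering of toggles, using $v_\gamma=1$) to justify this. Your case split on whether $x$ was already in $I_{j-1}$ versus inserted by $t_x$ makes explicit the step the paper compresses into ``As a result, we must have\dots''.
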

\begin{proof}
	Suppose $(i_1,\dots,i_n) \in \recomb{v}{\gamma}$. By definition, $(i_1,\dots,i_j-1,\dots,i_n) \in \recomb{v}{\gamma}$ for $j \ne \gamma$. To show that $\recomb{v}{\gamma}$ is an order ideal, it suffices to show $(i_1,\dots,i_\gamma-1,\dots,i_n) \in \recomb{v}{\gamma}$ for $i_\gamma \ge 2$; if $i_\gamma=1$ there is nothing to show.  Because $(i_1,\dots,i_n) \in \recomb{v}{\gamma}$, we have $(i_1,\dots,i_n) \in L_{\gamma}^{i_\gamma}(\mathrm{Pro}_v^{i_{\gamma}-1}(I))$. By Theorem \ref{thm:ndcommute}, Pro$_{v}=\prod_{j=1}^{a_\gamma} T^{\new{a_\gamma+1-j}}_{\mathrm{Pro}_{v^{\widehat{\gamma}}}}$, which implies we can commute the toggle relations in Pro$_{v}$ so that $L_{\gamma}^{i_\gamma}$ is toggled before $L_{\gamma}^{i_{\gamma-1}}$. As a result, we must have $(i_1,\dots,i_\gamma-1,\dots,i_n) \in L_{\gamma}^{i_\gamma-1}(\mathrm{Pro}_v^{i_{\gamma}-2}(I))$.  Therefore, $(i_1,\dots,i_\gamma-1,\dots,i_n) \in \recomb{v}{\gamma}$.
\end{proof}

We can now state our second main result, which shows when recombination gives us an equivariant bijection from $J([a_1] \times \dots \times [a_n])$ under $\mathrm{Pro}_{u}$ to $J([a_1] \times \dots \times [a_n])$ under $\mathrm{Pro}_{v}$. This result will allow us to prove Theorem~\ref{thm:mainhom}.
\begin{theorem}
	\label{thm:genrecomb}
	Let $I\in J([a_1] \times \dots \times [a_n])$. Suppose we have $u,v \in \{\pm 1\}^n$ and $\gamma$ such that $v_\gamma=1$, $u_\gamma =-1$, and ${v^{\widehat{\gamma}}} = {u^{\widehat{\gamma}}}$. Then $\mathrm{Pro}_{u}(\recomb{v}{\gamma})=\Delta_v^{\gamma}(\mathrm{Pro}_{v}(I))$.
\end{theorem}
\begin{proof}
	First, note that $\recomb{v}{\gamma}$ is an order ideal by Lemma \ref{lemma:orderideal}. Also note that Pro$_{v}=\prod_{j=1}^{a_\gamma} T^{j}_{\mathrm{Pro}_{v^{\widehat{\gamma}}}}$ and Pro$_{u}=\prod_{j=1}^{a_\gamma} T^{a_\gamma+1-j}_{\mathrm{Pro}_{u^{\widehat{\gamma}}}}$ by Theorem \ref{thm:ndcommute}. We will show $\mathrm{Pro}_{u}(\recomb{v}{\gamma}$)=$\Delta_v^{\gamma}(\mathrm{Pro}_{v}(I))$ by showing $L_{\gamma}^{k}(\mathrm{Pro}_{u}(\recomb{v}{\gamma}))=L_{\gamma}^{k}(\Delta_v^{\gamma}(\mathrm{Pro}_{v}(I)))$ for each $k \in \{1,2,\dots,a_\gamma\}$. There are three cases.
	
	\textbf{Case $1 < k < a_\gamma$:} Let $J=\mathrm{Pro}_v^{k-1}(I)$. We can commute the toggles of Pro$_{v}$ so that $L_{\gamma}^{k+1}$ of $J$ is toggled before $L_{\gamma}^{k}$ of $J$, which is toggled before $L_{\gamma}^{k-1}$ of $J$. Thus, when applying the toggles of Pro$_{v}$ to $L_{\gamma}^{k}$ of $J$, the layer above is $L_{\gamma}^{k+1}(\mathrm{Pro}_v(J))$ whereas the layer below is $L_{\gamma}^{k-1}(J)$. Additionally, we can also commute the toggles of Pro$_{u}$ so $L_{\gamma}^{k-1}$ of $\recomb{v}{\gamma}$ is toggled before $L_{\gamma}^{k}$ of $\recomb{v}{\gamma}$, which is toggled before $L_{\gamma}^{k+1}$ of $\recomb{v}{\gamma}$. Therefore, when applying the toggles of Pro$_{u}$ to $L_{\gamma}^{k}$ of $\recomb{v}{\gamma}$, the layer below is $L_{\gamma}^{k-1}(\mathrm{Pro}_u(\recomb{v}{\gamma}))$, whereas the layer above is $L_{\gamma}^{k+1}(\recomb{v}{\gamma})$. However, $L_{\gamma}^{k-1}(\mathrm{Pro}_u(\recomb{v}{\gamma}))=L_{\gamma}^{k-1}(J)$, $L_{\gamma}^{k}(\recomb{v}{\gamma})=L_{\gamma}^{k}(J)$, and $L_{\gamma}^{k+1}(\recomb{v}{\gamma})=L_{\gamma}^{k+1}(\mathrm{Pro}_v(J))$. Therefore, when applying Pro$_{v}$ to $L_{\gamma}^{k}$ of $J$ and Pro$_{u}$ to $L_{\gamma}^{k}$ of $\recomb{v}{\gamma}$, both layers are the same and have the same layers above and below them. Because $u^{\widehat{\gamma}}=v^{\widehat{\gamma}}$, we have $\mathrm{Pro}_{u^{\widehat{\gamma}}}=\mathrm{Pro}_{v^{\widehat{\gamma}}}$ and so the result of toggling this layer is $L_{\gamma}^{k}($Pro$_{u}(\recomb{v}{\gamma}))$, which is the same as $L_{\gamma}^{k}($Pro$_{v}(J))=L_{\gamma}^{k}(\mathrm{Pro}_v^{k}(I))=L_{\gamma}^{k}(\Delta_v^{\gamma}(\mathrm{Pro}_{v}(I)))$.
	
	\textbf{Case $k = 1$:} As above, when applying Pro$_{v}$ to $L_{\gamma}^{1}$ of $I$ and Pro$_{u}$ to $L_{\gamma}^{1}$ of $\recomb{v}{\gamma}$, both of these layers are the same, along with the layers above them. Because $k=1$, there is not a layer below. As above, $\mathrm{Pro}_{u^{\widehat{\gamma}}}=\mathrm{Pro}_{v^{\widehat{\gamma}}}$ and so we again obtain $L_{\gamma}^{1}(\mathrm{Pro}_{u}(\recomb{v}{\gamma}))=L_{\gamma}^{1}(\Delta_v^{\gamma}(\mathrm{Pro}_{v}(I)))$.
	
	\textbf{Case  $k=a_\gamma$:} Again, as above, when applying Pro$_{v}$ to $L_{\gamma}^{a_\gamma}$ of $\new{J}$ and Pro$_{u}$ to $L_{\gamma}^{a_\gamma}$ of $\new{\recomb{v}{\gamma}}$, both of these layers are the same along with the layers below them. Because $k=a_\gamma$ there is not a layer above. Again, $\mathrm{Pro}_{u^{\widehat{\gamma}}}=\mathrm{Pro}_{v^{\widehat{\gamma}}}$ and so $L_{\gamma}^{a_\gamma}(\mathrm{Pro}_{u}(\recomb{v}{\gamma}))=L_{\gamma}^{a_\gamma}(\Delta_v^{\gamma}(\mathrm{Pro}_{v}(I)))$.
\end{proof}

\begin{figure}[htbp]
	\centering
	\includegraphics[scale=1]{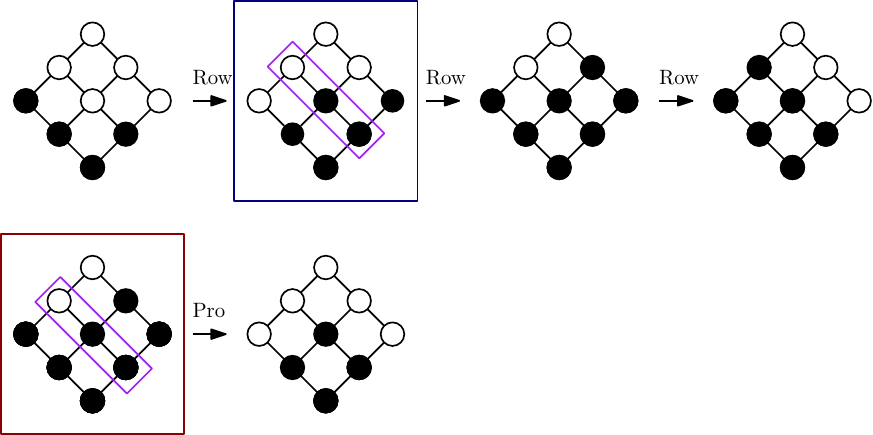}
	\caption{We refer to the same example as in Figure \ref{fig:3x3recomb}. The boxed purple layers correspond under recombination. In Example \ref{ex:recomb}, we demonstrate the idea of the proof using the order ideals in the large blue and red boxes.}
	\label{fig:proofpicture1}
\end{figure}

\begin{figure}[htbp]
	\centering
	\includegraphics[width=.7\linewidth]{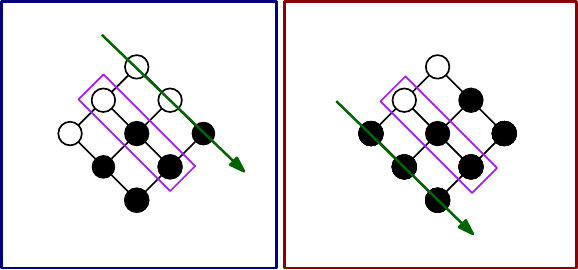}
	\caption{When performing Row to the left figure, $L_{1}^3(I)$ is toggled first in the direction indicated. When performing Pro to the right figure, $L_{1}^1(I)$ is toggled first in the direction indicated.}
	\label{fig:proofpicture2}
\end{figure}

\begin{figure}[htbp]
	\centering
	\includegraphics[width=.7\linewidth]{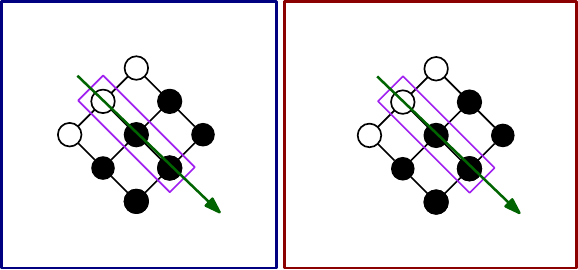}
	\caption{After performing the toggles from Figure \ref{fig:proofpicture2}, the order ideal in the left figure now has $L_{1}^3(I)$ from the order ideal that follows it in the orbit of Row. Similarly, the order ideal in the left figure has $L_{1}^1(I)$ from the order ideal that follows it in the orbit of Pro. When performing toggles on the purple layer, the three layers are the same.}
	\label{fig:proofpicture3}
\end{figure}

\begin{example}
	\label{ex:recomb}
	To see an example of the proof technique, we will refer to Figures \ref{fig:proofpicture1}, \ref{fig:proofpicture2}, and \ref{fig:proofpicture3}. We begin in Figure \ref{fig:proofpicture1} with the same orbit under Row as in Figure \ref{fig:3x3recomb}. Let $I$ denote the first order ideal in this orbit; using recombination we form the order ideal $\recomb{(1,1)}{1}$. We want to verify that by forming sequential recombination order ideals, we obtain an orbit under Pro. We will do so by showing that corresponding layers in the Row orbit and the sequence of recombination order ideals result in the same layer after performing Row and Pro, respectively. The boxed purple layers $L_{1}^2(I)$ in both orbits of Figure~\ref{fig:proofpicture1} correspond under recombination. We can commute the toggles of Row as we did in Figure~\ref{fig:commute2}. We can also commute the toggles of Pro so we toggle layer 3, then layer 2, then layer 1 in Figure~\ref{fig:commute2}. This means when performing Row, we first toggle the layer indicated by the green arrow in the left figure in Figure~\ref{fig:proofpicture2}; similarly for Pro and the right figure in Figure~\ref{fig:proofpicture2}. Then, the next step of both Row and Pro is to toggle the boxed purple layer, as seen in Figure~\ref{fig:proofpicture3}. We see that when we perform this step of Row and Pro, the boxed purple layer, the layer above, and the layer below are the same. Because we are toggling the same direction along the boxed purple layer, we are guaranteed the same result in both cases.
\end{example}

We have three immediate corollaries that will be useful in the proof of Theorem \ref{thm:mainhom}.
\begin{corollary}
	\label{cor:rowrecomb}
	$\mathrm{Pro}_{(1,1,-1)}(\recomb{(1,1,1)}{3})=\Delta_{(1,1,1)}^{3}(\mathrm{Pro}_{(1,1,1)}(I))$.
\end{corollary}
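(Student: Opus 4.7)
The plan is to observe that this corollary is a direct specialization of Theorem \ref{thm:genrecomb}, obtained by plugging in the specific choice of parameters $v = (1,1,1)$, $u = (1,1,-1)$, and $\gamma = 3$. No new ideas are needed; the work consists in verifying that the three hypotheses of the theorem are satisfied for this choice and then reading off the conclusion. Since Theorem \ref{thm:genrecomb} has already been established, the corollary follows in one step once the hypotheses are checked.

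So first I would take $v = (1,1,1)$, $u = (1,1,-1)$, $\gamma = 3$, and confirm the three hypotheses of Theorem \ref{thm:genrecomb} in turn: $v_\gamma = v_3 = 1$; $u_\gamma = u_3 = -1$; and $v^* = u^*$, where by definition $v^*$ and $u^*$ are obtained from $v$ and $u$ by deleting the $\gamma$-th (here, third) coordinate, giving $v^* = (1,1) = u^*$. With these verified, Theorem \ref{thm:genrecomb} yields the identity $\mathrm{Pro}_{u}(\recomb{v}{\gamma}) = \Delta_v^{\gamma}(\mathrm{Pro}_{v}(I))$, which after substituting the chosen parameters is exactly $\mathrm{Pro}_{(1,1,-1)}(\recomb{(1,1,1)}{3}) = \Delta_{(1,1,1)}^{3}(\mathrm{Pro}_{(1,1,1)}(I))$.

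There is no real obstacle: the only thing to be careful about is the bookkeeping on coordinate deletion when computing $v^*$ and $u^*$, and the conceptual point (worth noting in passing, though not required for the proof) that $\mathrm{Pro}_{(1,1,1)} = \mathrm{Row}$ by the proposition recorded just after Definition \ref{def:hypetoggle}, which explains the name \texttt{cor:rowrecomb} and shows that this corollary is precisely the natural three-dimensional analogue of the two-dimensional Proposition \ref{prop:2drecomb}.
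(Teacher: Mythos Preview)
Your proposal is correct and takes essentially the same approach as the paper: both simply note that $v=(1,1,1)$, $u=(1,1,-1)$, and $\gamma=3$ satisfy the hypotheses of Theorem~\ref{thm:genrecomb}, yielding the corollary immediately. Your version spells out the verification of $v^*=u^*$ explicitly, but the substance is identical.
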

\begin{proof}
	$v=(1,1,1)$, $u=(1,1,-1)$, and $\gamma=3$ satisfy the assumptions of Theorem \ref{thm:genrecomb}.
\end{proof}
\begin{corollary}
	\label{cor:prorecomb}
	$\mathrm{Pro}_{(-1,1,-1)}(\recomb{(1,1,-1)}{1})=\Delta_{(1,1,-1)}^{1}(\mathrm{Pro}_{(1,1,-1)}(I))$.
\end{corollary}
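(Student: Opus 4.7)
The plan is to derive this identity as a direct specialization of Theorem \ref{thm:genrecomb}, in exactly the same spirit as Corollary \ref{cor:rowrecomb}. I would take $v = (1,1,-1)$, $u = (-1,1,-1)$, and $\gamma = 1$, and check that these choices satisfy the hypotheses of that theorem.

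The verification is routine arithmetic with sign vectors. We have $v_\gamma = v_1 = 1$ and $u_\gamma = u_1 = -1$, as required by the sign conditions on the $\gamma$-th coordinate. Deleting the first coordinate gives $v^* = (v_2, v_3) = (1, -1)$ and $u^* = (u_2, u_3) = (1, -1)$, so $v^* = u^*$, which is the final hypothesis. With all conditions met, Theorem \ref{thm:genrecomb} immediately yields $\mathrm{Pro}_u(\recomb{v}{\gamma}) = \Delta_v^\gamma(\mathrm{Pro}_v(I))$, and substituting the specific values of $v$, $u$, and $\gamma$ recovers precisely the stated identity.

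There is no real obstacle here because the heavy lifting, namely the commutation analysis of Theorem \ref{thm:ndcommute} and the layer-by-layer matching argument in the proof of Theorem \ref{thm:genrecomb}, has already been carried out. The only structural difference from Corollary \ref{cor:rowrecomb} is that the third coordinate of both $v$ and $u$ has been flipped from $+1$ to $-1$. Because this flip occurs uniformly in both vectors, the starred vectors $v^*$ and $u^*$ remain equal, so the generalized recombination theorem applies without modification. I would anticipate this corollary being used in tandem with Corollary \ref{cor:rowrecomb} to chain recombinations across different promotion actions on $J([2] \times [a] \times [b])$ in the proof of Theorem \ref{thm:mainhom}.
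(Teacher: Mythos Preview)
Your proposal is correct and matches the paper's own proof exactly: the paper simply states that $v=(1,1,-1)$, $u=(-1,1,-1)$, and $\gamma=1$ satisfy the assumptions of Theorem~\ref{thm:genrecomb}. Your additional verification of $v^*=u^*=(1,-1)$ and anticipation of the corollary's role in chaining recombinations for Theorem~\ref{thm:mainhom} are accurate elaborations.
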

\begin{proof}
	$v=(1,1,-1)$, $u=(-1,1,-1)$, and $\gamma=1$ satisfy the assumptions of Theorem~\ref{thm:genrecomb}.
\end{proof}
\begin{corollary}
	\label{cor:thirdrecomb}
	$\mathrm{Pro}_{(1,-1,-1)}(\recomb{(1,1,-1)}{2})=\Delta_{(1,1,-1)}^{2}(\mathrm{Pro}_{(1,1,-1)}(I))$.
\end{corollary}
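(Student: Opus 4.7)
The plan is simply to recognize that this corollary is another direct specialization of Theorem \ref{thm:genrecomb}, in exactly the same vein as Corollaries \ref{cor:rowrecomb} and \ref{cor:prorecomb}. I would choose $v=(1,1,-1)$, $u=(1,-1,-1)$, and $\gamma=2$, and then verify that these choices satisfy the hypotheses of the theorem.

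The three conditions to check are $v_\gamma=1$, $u_\gamma=-1$, and $v^*=u^*$. The first gives $v_2=1$ and the second gives $u_2=-1$, both by inspection. For the last, deleting the second coordinate from each vector yields $v^*=(v_1,v_3)=(1,-1)$ and $u^*=(u_1,u_3)=(1,-1)$, so they agree. Once the hypotheses are verified, Theorem \ref{thm:genrecomb} immediately yields $\mathrm{Pro}_{u}(\recomb{v}{\gamma})=\Delta_v^{\gamma}(\mathrm{Pro}_v(I))$, which is precisely the stated claim.

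There is essentially no obstacle here, since all of the work has been done in Theorem \ref{thm:genrecomb}. The only real content of this corollary is the particular choice $\gamma=2$, which complements Corollaries \ref{cor:rowrecomb} and \ref{cor:prorecomb} (corresponding to $\gamma=3$ and $\gamma=1$, respectively) by covering recombination along the remaining coordinate direction. I would expect the three corollaries to be used together in the proof of Theorem \ref{thm:mainhom} to move between orbits of $\mathrm{Pro}_v$ for the various sign patterns $v=(\pm 1,\pm 1,\pm 1)$, thereby reducing the homomesy statement for all eight choices of $v$ on $J([2]\times[a]\times[b])$ to a single case.
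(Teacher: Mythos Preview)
Your proof is correct and is exactly the approach taken in the paper: choose $v=(1,1,-1)$, $u=(1,-1,-1)$, $\gamma=2$, verify the hypotheses of Theorem~\ref{thm:genrecomb}, and apply it. The paper's proof is even terser, simply asserting that these choices satisfy the assumptions of the theorem.
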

\begin{proof}
	$v=(1,1,-1)$, $u=(1,-1,-1)$, and $\gamma=2$ satisfy the assumptions of Theorem~\ref{thm:genrecomb}.
\end{proof}

Recombination gives us an equivariant bijection between order ideals under different promotion actions. From this, we have a connection between orbits of different promotion actions. Suppose $v$ and $u$ are as in Theorem \ref{thm:genrecomb}. If we find the recombination of each order ideal in an orbit of $\mathrm{Pro}_{v}$, we obtain a sequence of order ideals that form an orbit under $\mathrm{Pro}_{u}$.

\begin{remark}
	\label{rmk:recombbij}
	Let $u,v$ be as in Theorem \ref{thm:genrecomb} and let $\orb$ be an orbit of order ideals in $J([a_1] \times \dots \times [a_n])$ under $\mathrm{Pro}_{u}$. There is a unique orbit $\orb'$ under $\mathrm{Pro}_{v}$ where the recombination of $\orb'$ is $\orb$. In other words, if we start with an orbit under $\mathrm{Pro}_{u}$, we can invert recombination to get an orbit under $\mathrm{Pro}_{v}$. More specifically, if we start with an orbit of $\prodchainstwo$ under Pro$_{(-1,1,-1)}$, we can acquire an orbit of $\prodchainstwo$ under Pro$_{(1,1,-1)}$.
\end{remark}

This observation will be used to show $\prodchainstwo$ exhibits homomesy under Pro$_{(-1,1,-1)}$ and Pro$_{(1,-1,-1)}$.

To prove Theorem \ref{thm:mainhom}, we relate the order ideals of our posets to increasing tableaux.  To do so, we first need a map from $\prodchains$ to increasing tableaux defined by Dilks, Pechenik, and Striker.

\begin{definition}
	An \textit{increasing tableau} of shape $\lambda$ is a filling of boxes of partition shape $\lambda$ with positive integers such that the entries strictly increase from left to right across rows and strictly increase from top to bottom along columns. We will use Inc$^{q}(\lambda)$ to indicate the set of increasing tableaux of shape $\lambda$ with entries at most $q$.
\end{definition}

Figure \ref{fig:inctab} shows an increasing tableaux in Inc$^{q}(3,3,1)$ where $q$ can be any integer greater than or equal to 6.

\begin{figure}[htbp]
	\ytableausetup{centertableaux}
	\begin{ytableau}
		1 & 2 & 4 \\
		2 & 4 & 5 \\
		6
	\end{ytableau}
	\caption{An increasing tableaux of shape $\lambda = (3,3,1)$.}
	\label{fig:inctab}
\end{figure}

\begin{definition}{\cite[Section 4.1]{DPS2017}}
	\label{def:prodincbij}
	Define a map $\Psi : \prodchains \rightarrow \inc{a+b+c-1}{a \times b}$ in the following way.  Let $I \in \prodchains$.  We can view $I$ as a pile of cubes in an $a \times b \times c$ box; we then project onto a Young diagram of shape $a \times b$. More specifically, record in position $(i,j)$ the number of boxes of $I$ with coordinate $(i,j,k)$ for some $k \in [c]$.  This results in a filling of a Young diagram of shape $a \times b$ with nonnegative entries that weakly decrease from left to right and top to bottom.  By rotating the diagram 180$\degree$, our Young diagram is now weakly increasing in rows and columns.  Now for the label in position $(i,j)$, increase the label by $i+j-1$.  This results in an increasing tableau, which we denote $\Psi(I)$.
\end{definition}

In Figure \ref{fig:tableaubijectionexample}, we see an example of $\Psi : J([2] \times [3] \times [2]) \rightarrow \inc{6}{2 \times 3}$, as defined in Definition \ref{def:prodincbij}. We also give a definition for $K$-promotion, an action defined on increasing tableaux by Pechenik in \cite{Pechenik2014}.

\begin{figure}[htbp]
	\includegraphics[width=.25\linewidth]{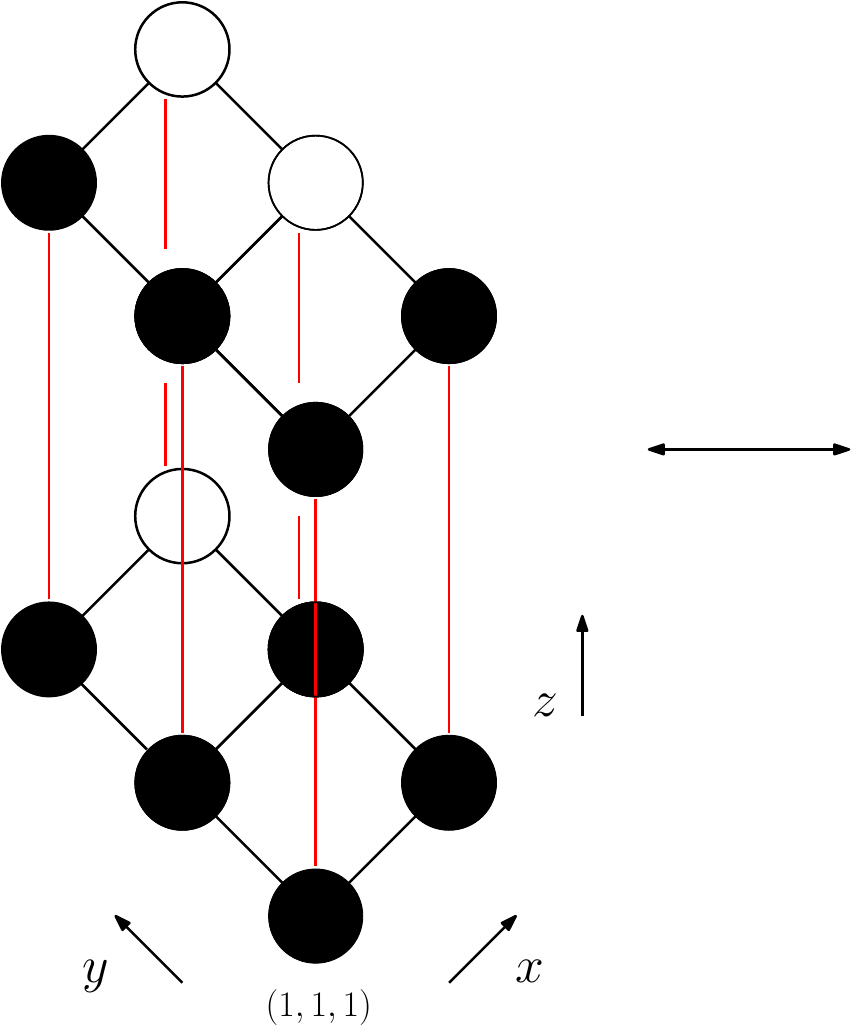}
	\includegraphics[width=.26\linewidth]{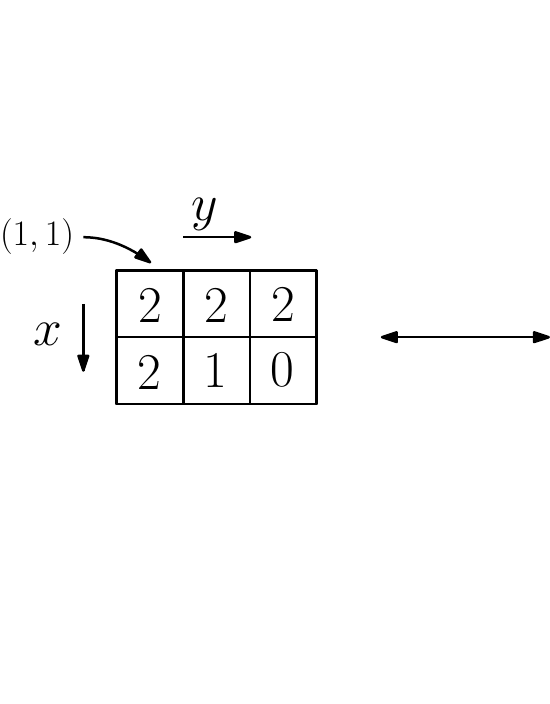}
	\includegraphics[width=.26\linewidth]{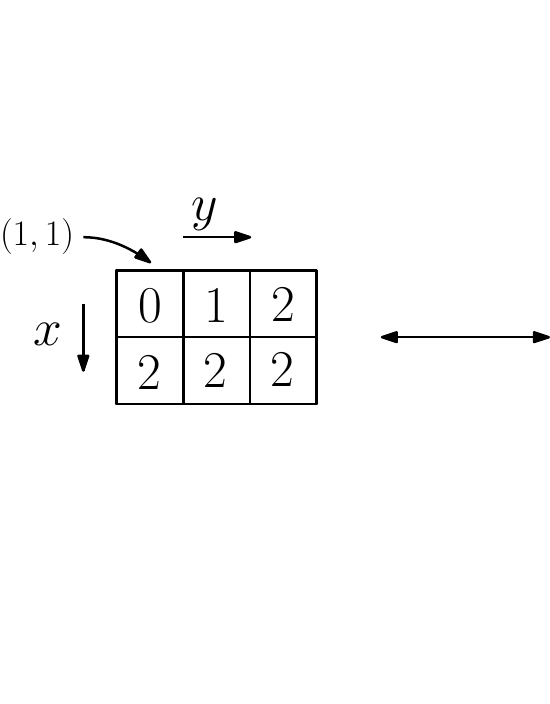}
	\includegraphics[width=.15\linewidth]{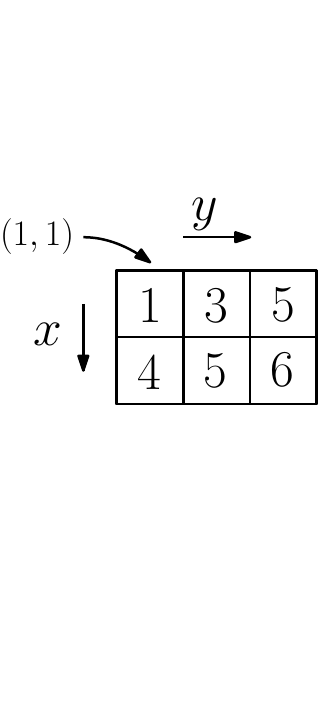}
	\caption{An example of the map $\Psi$ described in Definition \ref{def:prodincbij}.}
	\label{fig:tableaubijectionexample}
\end{figure}

\begin{definition}{\cite[Section 1]{Pechenik2014}}
	\label{def:tabjdt}
	Let $T \in \inc{q}{\lambda}$. Delete all labels 1
	from $T$. Consider the set of boxes that are either empty or contain 2. We simultaneously delete
	each label 2 that is adjacent to an empty box and place a 2 in each empty box that is adjacent to a 2. Now consider the set of boxes that are either empty or contain 3, and repeat the above
	process. Continue until all empty boxes are located at outer corners of $\lambda$.
	Finally, label those boxes $q + 1$ and then subtract 1 from each entry. The
	result is the \emph{$K$-promotion of $T$}, which we denote $\kpro(T)$. Note that $\kpro(T) \in \inc{q}{\lambda}$.
\end{definition}

Along with defining $\Psi$, Dilks, Pechenik, and Striker also showed that $\Psi$ intertwines $\mathrm{Pro}_{(1,1,-1)}$ and $\kpro$.

\begin{theorem}{\cite[Theorem 4.1, Lemma 4.2]{DPS2017}}
	\label{thm:psibij}
	$\Psi$ is an equivariant bijection between $\prodchains$ under $\mathrm{Pro}_{(1,1,-1)}$ and $\inc{a+b+c-1}{a \times b}$ under $\kpro$.
\end{theorem}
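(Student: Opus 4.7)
The plan is to prove the theorem in two stages: first that $\Psi$ is a bijection, and second that it intertwines $\mathrm{Pro}_{(1,1,-1)}$ with $K\text{-}\mathrm{Pro}$. For bijectivity, I would construct $\Psi^{-1}$ by reversing each operation in the definition of $\Psi$. Given $T \in \mathrm{Inc}^{a+b+c-1}(a \times b)$, subtract the offset $i+j-1$ from the entry in cell $(i,j)$, rotate $180^{\circ}$, and interpret the resulting weakly decreasing array of nonnegative integers as the stack heights defining an order ideal of $[a]\times[b]\times[c]$. Checking that each step stays in the correct range is routine: a strictly increasing tableau becomes weakly increasing after subtracting $i+j-1$; rotation reverses the monotonicity; and the bound $a+b+c-1$ on entries caps the resulting heights at $c$. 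Composing with $\Psi$ in both directions recovers the identity by inspection.

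For equivariance, the key observation is that $\mathrm{Pro}_{(1,1,-1)}$ toggles the elements of $[a]\times[b]\times[c]$ by sweeping through the hyperplanes $\{x_1+x_2-x_3=d\}$ in decreasing order of $d$, toggling all elements on each hyperplane in parallel (they are pairwise incomparable, as noted after Definition~\ref{def:hypetoggle}). Under $\Psi$, an element $(i,j,k)$ corresponds to a \emph{height transition} at cell $(a+1-i,\,b+1-j)$ of the tableau, and toggling $(i,j,k)$ changes the entry there by $\pm 1$. I would translate $K\text{-}\mathrm{Pro}$ in parallel: delete the entries equal to $1$ (which correspond exactly to the cells where the associated stack in $I$ is empty), perform $K$-jeu de taquin to slide the resulting dots to outer corners, decrement all remaining entries by $1$, and fill the dots with the maximum value $a+b+c-1$. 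The equivariance then reduces to matching the sequence of $K$-jdt slides with the sequence of hyperplane sweeps.

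The hard part will be matching the $K$-jdt \emph{merging} behavior with the parallel toggling of incomparable elements on a single hyperplane. In $K$-jdt, several adjacent empty cells can simultaneously absorb a common neighbor, and this must correspond precisely to the simultaneous toggling of several elements sharing a hyperplane of $[a]\times[b]\times[c]$. I would handle this via a local lemma: one $K$-jdt slide of the tableau at level $\ell$ (in the indexing where $\ell$ tracks the value being slid past) is equivalent to toggling all elements of the hyperplane $x_1+x_2-x_3 = a+b-\ell$, by a case analysis on each pair of adjacent cells and the possible local configurations of the order ideal below them. An alternative route would be to decompose both $\mathrm{Pro}_{(1,1,-1)}$ and $K\text{-}\mathrm{Pro}$ into generators of the toggle group, in the spirit of Cameron--Fon-der-Flaass, and check the intertwining on generators; but the direct hyperplane-by-hyperplane comparison is the most concrete and fits naturally with the layer-based perspective used elsewhere in this paper.
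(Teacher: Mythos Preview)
This theorem is not proved in the present paper at all: it is quoted from \cite{Resonance} and stated without proof, so there is no ``paper's own proof'' to compare against. Your task was therefore vacuous in the strict sense.

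That said, your sketch is a reasonable outline of how the result is actually established in \cite{Resonance}, with one caveat. The original argument does not match $K$-jeu de taquin slides directly to hyperplane sweeps as you propose in your main line; rather, it proceeds via your ``alternative route.'' Dilks, Pechenik, and Striker decompose $K\text{-}\mathrm{Pro}$ as a product of $K$-Bender--Knuth involutions $\mathsf{KBK}_i$ (analogous to ordinary Bender--Knuth moves but allowing the merging behavior you mention), and then show that under $\Psi$ each $\mathsf{KBK}_i$ corresponds exactly to the hyperplane toggle $T^i_{(1,1,-1)}$. This generator-by-generator check is cleaner than tracking a full $K$-jdt rectification, precisely because the ``hard part'' you identify --- the merging of multiple empty cells during a slide --- is localized into a single $\mathsf{KBK}_i$ acting on the set of cells with entries in $\{i,i+1\}$, which is exactly the content of one hyperplane. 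So your instinct that the alternative route is available is correct, and in fact it is the route taken; your primary proposal (matching full jdt slides to sweeps) would work but requires more bookkeeping than necessary.
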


Furthermore, we can relate the cardinality of $I$ to the sum of the entries in $\Psi(I)$.

\begin{lemma}
	\label{lemma:boxsum}
	If $I \in \prodchainstwo$, the sum of the boxes in $\Psi(I)$ is equal to $f(I)+b(b+2)$ where $f$ is the cardinality statistic.
\end{lemma}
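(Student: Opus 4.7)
The plan is to unpack the definition of $\Psi$ step by step, tracking how the sum of the entries changes at each stage. Applied to $I \in \prodchainstwo$, the map $\Psi$ identifies $[2] \times [a] \times [b]$ with the triple form $[a'] \times [b'] \times [c']$ where $a' = 2$, $b' = a$, $c' = b$, so that $\Psi(I)$ is an increasing tableau of shape $2 \times a$ with entries at most $a + b + 1$. The construction of $\Psi$ consists of three stages: (i) projecting the pile of cubes representing $I$ onto the $2 \times a$ face, producing a filling whose entry in position $(i,j)$ is $n_{ij} = \#\{k : (i,j,k) \in I\}$; (ii) rotating this filling by $180\degree$; and (iii) adding $i+j-1$ (one more than the distance to the upper-left corner $(1,1)$) to the entry in position $(i,j)$.

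Stage (i) contributes
\begin{align*}
\sum_{(i,j) \in [2] \times [a]} n_{ij} = \#I = f(I)
\end{align*}
to the sum of the entries in $\Psi(I)$, and stage (ii) is a permutation of positions, so it preserves this sum. It then remains to compute the total contribution of stage (iii):
\begin{align*}
\sum_{i=1}^{2} \sum_{j=1}^{a} (i+j-1) &= \sum_{j=1}^{a} j + \sum_{j=1}^{a}(j+1) \\
&= \tfrac{a(a+1)}{2} + \tfrac{a(a+1)}{2} + a \\
&= a(a+1) + a = a(a+2).
\end{align*}
Adding the two contributions yields $f(I) + a(a+2)$, which is the identity claimed by the lemma.

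The computation itself is routine; the only delicate point is fixing the identification correctly, so that $[2] \times [a] \times [b]$ produces a tableau of shape $2 \times a$ rather than $a \times 2$ or $a \times b$, at which stage the formula $a(a+2)$ emerges naturally as the sum of $i+j-1$ over a $2 \times a$ rectangle. I do not anticipate any substantive obstacle in executing this plan.
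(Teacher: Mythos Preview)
Your proposal is correct and is exactly what the paper intends: its proof reads in full ``This follows from the definition of $\Psi$ and the shape of $\Psi(I)$,'' and you have simply carried out that unpacking explicitly, computing $\sum_{i=1}^{2}\sum_{j=1}^{a}(i+j-1)=a(a+2)$ for the $2\times a$ shape.
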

\begin{proof}
	This follows from the definition of $\Psi$ and the shape of $\Psi(I)$.
\end{proof}

As a result of this lemma, if we can find an appropriate homomesy result on increasing tableaux, we can transfer the result over to $\prodchainstwo$ under Pro$_{(1,1,-1)}$ using $\Psi$, then to $\prodchainstwo$ under Row using Corollary \ref{cor:rowrecomb}. As it turns out, the appropriate homomesy result has already been discovered by J.\ Bloom, Pechenik, and D.\ Saracino.

\begin{theorem}{\cite[Theorem 6.5]{BPS2016}}
	\label{thm:tabhom}
	Consider an increasing tableau of shape $2 \times n$ and let $S$ be a subset of boxes fixed under $180\degree$ rotation. Additionally, let $\sigma_S$ be the statistic of summing the entries in the boxes of $S$.  Then for any $q$, $(\mathrm{Inc}^q(\lambda)$, $K$-$\mathrm{Pro}$, $\sigma_S)$ is homomesic.
\end{theorem}

Note that if $S$ consists of all boxes in a $2 \times n$ increasing tableau, then $S$ is fixed under 180$\degree$ rotation. Moreover, for $I \in \prodchainstwo$, $\Psi(I)$ is an increasing tableau of shape $2 \times b$. With this theorem, we now have sufficient machinery to prove Theorem \ref{thm:mainhom}.

\begin{proof}[Proof of Theorem \ref{thm:mainhom}]
	Each orbit of $J([2] \times [b] \times [c])$ under $\mathrm{Pro}_{(1,1,-1)}$ corresponds to an orbit of $\mathrm{Inc}^{b+c+1}(\lambda)$ under $K$-Pro. Therefore, by using Theorem \ref{thm:tabhom}, Lemma \ref{lemma:boxsum}, Theorem  \ref{thm:psibij} and the map $\Psi$, we may already conclude $\prodchainstwo$ exhibits homomesy under Pro$_{(1,1,-1)}$.  Moreover, Pro$_{(-1,-1,1)}$ reverses the direction that our hyperplanes sweep through our poset, which merely reverses our orbits of order ideals. Thus, $\prodchainstwo$ also exhibits homomesy under Pro$_{(-1,-1,1)}$. To prove Theorem \ref{thm:mainhom} for the remaining $v$, we begin with $v=(1,1,1)$, which is Row.
	
	Let $\orb_1, \orb_2$ be orbits of $\prodchainstwo$ under Row. Additionally, let $R_1$ and $R_2$ be the orbits formed by applying recombination $\recomb{(1,1,1)}{3}$ to each order ideal $I$ in $\orb_1$ and $\orb_2$, respectively. By Corollary \ref{cor:rowrecomb}, we have that $R_1$ and $R_2$ are orbits under Pro$_{(1,1,-1)}$. Therefore, we know that the average of the cardinality over $R_1$ and $R_2$ must be equal. As a result, the average of the cardinality over $\orb_1$ and $\orb_2$ must be equal. Hence, $\prodchainstwo$ is homomesic under Row. Again, because Pro$_{(-1,-1,-1)}$ merely reverses the direction of hyperplane toggles, we conclude that $\prodchainstwo$ is homomesic under Pro$_{(-1,-1,-1)}$.

	We now turn our attention to Pro$_{(-1,1,-1)}$ and Pro$_{(1,-1,-1)}$. Using Corollaries \ref{cor:prorecomb} and \ref{cor:thirdrecomb}, Remark \ref{rmk:recombbij}, and similar arguments as above, we see $\prodchainstwo$ is homomesic under both Pro$_{(-1,1,-1)}$ and Pro$_{(1,-1,-1)}$.  Pro$_{(1,-1,1)}$ and Pro$_{(-1,1,1)}$ reverse the orbits of Pro$_{(-1,1,-1)}$ and Pro$_{(1,-1,-1)}$, respectively, so $\prodchainstwo$ is homomesic under both Pro$_{(1,-1,1)}$ and Pro$_{(-1,1,1)}$ as well.
	
	We have shown the desired triples are homomesic, but we still must show the orbit average is $bc$. Due to rotational symmetry, the order filters of $\prodchainstwo$ are in bijection with the order ideals of $\prodchainstwo$. More specifically, let $I\in \prodchainstwo$. Let $H \in \prodchainstwo$ be the order ideal isomorphic to $P\setminus I$. Therefore, $f(I)+f(H)=2bc$. As a result, we can say the global average of $f$ is $bc$, and hence the triple must be $bc$-mesic.
\end{proof}

We immediately obtain the following corollaries by symmetry.
\begin{corollary}
	\label{cor:maincor1}
	Let $f$ be the cardinality statistic and $v \in \{\pm 1\}^n$.  The triple $(J([a] \times [2] \times [c]), \mathrm{Pro}_v, f)$ is $ac$-mesic and the triple $(J([a] \times [b] \times [2]), \mathrm{Pro}_v, f)$ is $ab$-mesic.
\end{corollary}

\begin{proof}[Proof of Corollary \ref{cor:maincor1}]
	Given an orbit $\orb$ of $J([a] \times [2] \times [c])$ under Pro$_{v}$, we can use a cyclic rotation of coordinates and appropriate choice of $v'$ to obtain an orbit $\orb'$ of $\prodchainstwo$ under Pro$_{v'}$ such that $\orb$ and $\orb'$ are in bijection. A similar argument applies to $J([a] \times [b] \times [2])$.
\end{proof}

We conclude the section by determining that Theorem \ref{thm:mainhom} does not generalize to an arbitrary product of three chains, a product of four chains, or a product of arbitrarily many two-element chains. Homomesy holds on the poset $[3] \times [3] \times [3]$; however, it does not on $[3] \times [3] \times [4]$.

\begin{proposition}
	\label{prop:334}
	Let $f$ be the cardinality statistic and $v \in \{\pm 1\}^n$. The triple $(J([3] \times [3] \times [3]), \mathrm{Pro}_v, f)$ is $27/2$-mesic. However, the triple $(J([3] \times [3] \times [4]), \mathrm{Pro}_v, f)$ is not homomesic.
\end{proposition}
\begin{proof}
	A calculation using SageMath \cite{sage} shows that $J([3] \times [3] \times [3])$ under Row with statistic $f$ has 124 orbits, all with average 27/2. However, $J([3] \times [3] \times [4])$ under Row with statistic $f$ has 456 orbits with average 18, 2 orbits with average $161/9 \approx 17.89$, and 2 orbits with average $163/9 \approx 18.11$. Using recombination, we obtain the same result for any $\mathrm{Pro}_v$.
\end{proof}

We can further inquire about homomesy in higher dimensions. We find homomesy in the poset $[2] \times [2] \times [2] \times [2]$, but a negative result if any of the chains have size three.  If we use only chains of size two, homomesy fails in dimension five.

\begin{proposition}
	\label{prop:22222}
	Let $f$ be the cardinality statistic and $v \in \{\pm 1\}^n$. The triple $J([2] \times [2] \times [2] \times [2]), \mathrm{Pro}_v, f)$ is 8-mesic. However, the triple $(J([2] \times [2] \times [2] \times [3]), \mathrm{Pro}_v, f)$ is not homomesic.  Additionally, the triple $(J([2] \times [2] \times [2] \times [2] \times [2]), \mathrm{Pro}_v, f)$ is not homomesic.
\end{proposition}
\begin{proof}
	A calculation using SageMath \cite{sage} shows that $J([2] \times [2] \times [2] \times [2])$ under Row with statistic $f$ has 36 orbits, all with average 8. However, $J([2] \times [2] \times [2] \times [3])$ has 109 orbits with average 12, 6 orbits with average $82/7 \approx 11.71$, and 6 orbits with average $86/7 \approx 12.29$. Additionally, $J([2] \times [2] \times [2] \times [2] \times [2])$ has 771 orbits with average 16, 60 orbits with average $115/7 \approx 16.43$, 60 orbits with average $109/7 \approx 15.57$, 30 orbits with average $61/4=15.25$, 30 orbits with average $67/4=16.75$, 6 orbits with average 11, and 6 orbits with average 21. Using recombination, we once again obtain the same results for any $\mathrm{Pro}_v$.
\end{proof}

\section{Tableaux and Refined Results}
In this section, we prove several related results and corollaries. Although Proposition \ref{prop:334} shows the cardinality statistic fails to be homomesic for an arbitrary product of three chains, Corollary \ref{cor:gen3chain} gives us a subset within the product of three chains that does exhibit homomesy. Additionally, we use our main homomesy result to obtain a new homomesy result on increasing tableaux in Corollary \ref{cor:tabhom}. In Corollary \ref{cor:rotsym}, we use refined homomesy results on increasing tableaux to state more refined homomesy results on order ideals.

For our main homomesy result, we used the bijection $\Psi^{-1}$ to translate a homomesy result on increasing tableaux to order ideals of a product of chains poset. After rotation on our product of chains to obtain Corollary \ref{cor:maincor1}, we can translate back to increasing tableaux using $\Psi$ to obtain an additional homomesy result on increasing tableaux. This is in the same spirit as the tri-fold symmetry used by Dilks, Pechenik, and Striker \cite[Corollary 4.7]{DPS2017}.

\begin{corollary}
	\label{cor:tabhom}
	Let $\lambda$ be an $a \times b$ rectangle and let $\sigma_\lambda$ be the statistic of summing the entries in the boxes of $\lambda$.  Then $(\mathrm{Inc}^{a+b+1}(\lambda)$, $K$-$\mathrm{Pro}$, $\sigma_\lambda)$ is $ab+\frac{ab(a+b)}{2}$-mesic.
\end{corollary}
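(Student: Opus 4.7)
The strategy is to transport Corollary \ref{cor:maincor2} across the equivariant bijection of Theorem \ref{thm:psibij}, then account for how $\Psi$ shifts the statistic. Specializing Theorem \ref{thm:psibij} to $c=2$ gives an equivariant bijection $\Psi : J([a]\times[b]\times[2]) \to \mathrm{Inc}^{a+b+1}(a\times b)$ that intertwines $\mathrm{Pro}_{(1,1,-1)}$ with $K$-$\mathrm{Pro}$, so $K$-$\mathrm{Pro}$-orbits on tableaux are in size-preserving bijection with $\mathrm{Pro}_{(1,1,-1)}$-orbits of order ideals. To prove $c$-mesy on the tableau side it therefore suffices to compute the orbit average of $\sigma_\lambda\circ\Psi$ on the order-ideal side.

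The next step is an $(a\times b)$-shape analogue of Lemma \ref{lemma:boxsum}: for every $I \in J([a]\times[b]\times[2])$,
\[
\sigma_\lambda(\Psi(I)) \;=\; f(I) \;+\; \sum_{i=1}^{a}\sum_{j=1}^{b}(i+j-1) \;=\; f(I) + \frac{ab(a+b)}{2}.
\]
This follows directly from the definition of $\Psi$: after projection and $180^\circ$ rotation the entries sum to the total number of cubes $f(I)$, while the added ``one more than the distance to the upper-left corner'' contributes the indicated double sum, evaluated by standard arithmetic-progression identities. This is essentially the computation that underlies Lemma \ref{lemma:boxsum}, done for shape $a\times b$ rather than $2\times a$.

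Combining these ingredients, $\sigma_\lambda\circ\Psi$ differs from $f$ by the constant $\tfrac{ab(a+b)}{2}$, so every $\mathrm{Pro}_{(1,1,-1)}$-orbit average of $\sigma_\lambda\circ\Psi$ equals the corresponding orbit average of $f$ plus this constant. By Corollary \ref{cor:maincor2} the latter equals $ab$, which yields orbit average $ab + \tfrac{ab(a+b)}{2} = \tfrac{ab(2+a+b)}{2}$ for $\sigma_\lambda$ under $K$-$\mathrm{Pro}$, as desired. The only non-mechanical part of the argument is the constant computation in the shape-$a\times b$ analogue of Lemma \ref{lemma:boxsum}; once that constant is in hand, the homomesy and its value are inherited for free from Corollary \ref{cor:maincor2} and the equivariance of $\Psi$.
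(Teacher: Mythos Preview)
Your proposal is correct and follows essentially the same approach as the paper: use the equivariant bijection $\Psi$ of Theorem~\ref{thm:psibij} with $c=2$, observe that $\sigma_\lambda(\Psi(I))=f(I)+\frac{ab(a+b)}{2}$ as an $a\times b$ analogue of Lemma~\ref{lemma:boxsum}, and then invoke Corollary~\ref{cor:maincor2}. The only difference is that you spell out the double-sum computation of the constant, whereas the paper simply asserts it.
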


\begin{proof}
	Each orbit of $\mathrm{Inc}^{a+b+1}(\lambda)$ under $K$-Pro corresponds to an orbit of $J([a] \times [b] \times [2])$ under $\mathrm{Pro}_{(1,1,-1)}$. For each $I \in J([a] \times [b] \times [2])$, $\sigma_\lambda(\Psi(I))=f(I)+\frac{ab(a+b)}{2}$ where $f$ is the cardinality statistic. Applying Corollary \ref{cor:maincor1}, the result follows.
\end{proof}

Additionally, we have a more refined homomesy result of Theorem \ref{thm:mainhom}. We obtain this using the rotational symmetry condition of Theorem \ref{thm:tabhom}. Define the columns $L^{j,k}_{1,2}=\{(i_1,i_2,i_3)\in [2] \times [b] \times [c] \ | \ i_1=j, i_2=k\}$. This notation is similar to the layer notation of Definition \ref{def:layers} with the exception that we fix two coordinates instead of one. We also define antipodal elements in a poset $[a]\times [b]$ to better describe the rotational symmetry.

\begin{definition}
	\label{def:axbantipodal}
	Let $P = [a] \times [b]$. If $x=(x_1,x_2)$ and $y=(a+1-x_1,b+1-x_2)$, then $x$ and $y$ are \emph{antipodal} in $P$.
\end{definition}

\begin{corollary}
	\label{cor:rotsym}
	Let $L^{j_1,k_1}_{1,2}$ and $L^{j_2,k_2}_{1,2}$ be such that the coordinates $(j_1,k_1)$ and $(j_2,k_2)$ are antipodal in $[2] \times [a]$. If $f_L(I)$ denotes the cardinality of $I$ on $L^{j_1,k_1}_{1,2}$ and $L^{j_2,k_2}_{1,2}$, then for $v \in \{\pm 1\}^n$, $(\prodchainstwo, \mathrm{Pro}_v, f_L)$ is $c$-mesic.
\end{corollary}
\begin{proof}
	The antipodal coordinates $(j_1,k_1)$ and $(j_2,k_2)$ are chosen so that the columns $L^{j_1,k_1}_{1,2}$ and $L^{j_2,k_2}_{1,2}$ correspond to a set of boxes in an increasing tableau fixed under $180^{\circ}$ rotation. In other words, we can use the refined homomesy result on increasing tableaux from Theorem \ref{thm:tabhom} and translate to $\prodchainstwo$ using the bijection $\Psi^{-1}$. As a result, we know $(\prodchainstwo, \mathrm{Pro}_v, f_L)$ exhibits homomesy. What remains to be shown is that the triple is $c$-mesic. Due to rotational symmetry, the order filters of $[2] \times [b] \times [c]$ are in bijection with the order ideals of $[2] \times [b] \times [c]$. More specifically, let $I\in \prodchainstwo$. Let $H \in \prodchainstwo$ be the order ideal isomorphic under rotation to the order filter $P\setminus I$. Therefore, $f_L(I)+f_L(H)=2c$. As a result, we can say the global average of $f_L$ is $c$. This gives us that $(\prodchainstwo, \mathrm{Pro}_{(1,1,-1)}, f_L)$ is $c$-mesic; using recombination we obtain that $(\prodchainstwo, \mathrm{Pro}_v, f_L)$ is $c$-mesic.
\end{proof}

\begin{example}
	We demonstrate Corollary \ref{cor:rotsym} using the poset $[2] \times [2] \times [2]$. Figure \ref{fig:rotsymcolumns} highlights in red two columns in our poset, $L^{1,2}_{1,2}$ and $L^{2,1}_{1,2}$. Because $(1,2)$ and $(2,1)$ are antipodal elements in the poset $[2] \times [2]$, we can apply Corollary \ref{cor:rotsym} to these two columns. Note that the corollary is valid for any $\mathrm{Pro}_v$. We show an example orbit of $\row$ in Figure \ref{fig:rotsymcolumns_orbit}. The orbit we chose has size five. Additionally, if we sum the cardinality of the order ideals in the columns $L^{1,2}_{1,2}$ and $L^{2,1}_{1,2}$ over the entire orbits, we obtain ten. Therefore, the average over the entire orbit is $10/5=2$, which is the value $c$ when expressing the poset in the form $[2] \times [b] \times [c]$. Additionally, if we select any other orbit, the corollary tells us we will obtain an average of 2.
\end{example}

\begin{figure}[htbp]
	\includegraphics[width=.25\linewidth]{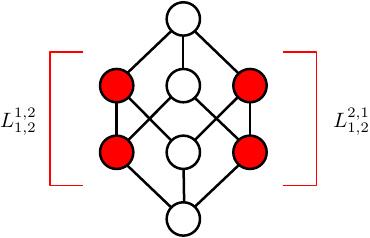}
	\caption{Because $(1,2)$ and $(2,1)$ are antipodal in the poset $[2] \times [2]$, we can apply Corollary \ref{cor:rotsym} to the columns $L^{1,2}_{1,2}$ and $L^{2,1}_{1,2}$, shown here in red.}
	\label{fig:rotsymcolumns}
\end{figure}

\begin{figure}[htbp]
	\includegraphics[width=.25\linewidth]{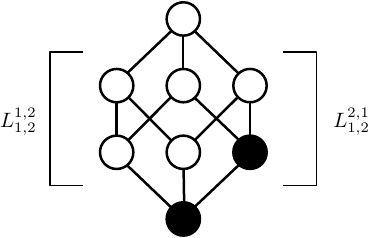} \quad
	\includegraphics[width=.25\linewidth]{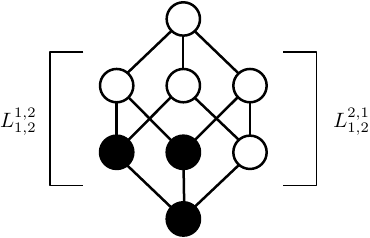} \quad
	\includegraphics[width=.25\linewidth]{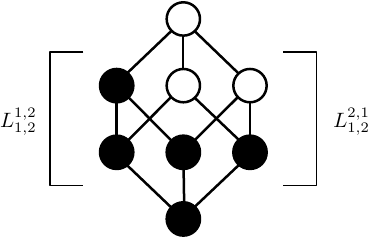}
	\includegraphics[width=.25\linewidth]{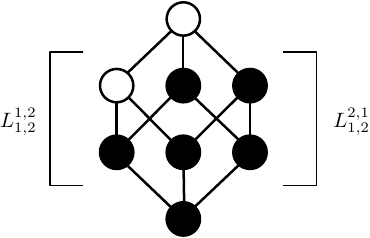} \quad
	\includegraphics[width=.25\linewidth]{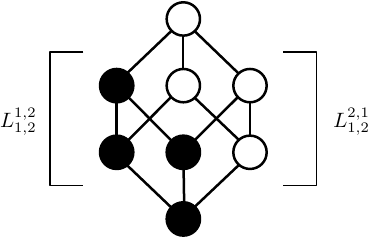}
	\caption{These five order ideals form an orbit under rowmotion. By summing the cardinality of the order ideal in just columns $L^{1,2}_{1,2}$ and $L^{2,1}_{1,2}$ and dividing by the size of the orbit, we obtain $10/5=2$, which corresponds to $c$ in $[2] \times [b] \times [c]$.}
	\label{fig:rotsymcolumns_orbit}
\end{figure}

Pechenik further generalized the results of \cite{BPS2016} and the result stated in Theorem \ref{thm:tabhom}. From this, we obtain a more general analogue of Corollary \ref{cor:rotsym}. We summarize the relevant definition and theorem below.

\begin{definition}{\cite[Section 1]{Pechenik2017}}
	The \textit{frame} of a partition $\lambda$ is the set \textbf{Frame}($\lambda$) of all boxes in the first or last row, or in the first or last column.
\end{definition}

\begin{figure}[htbp]
	\includegraphics[width=.25\linewidth]{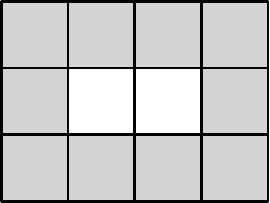} \quad
	\caption{This is the partition of shape $3 \times 4$. The frame of the partition is the set of boxes highlighted in gray.}
	\label{fig:frame}
\end{figure}

\begin{theorem}{\cite[Theorem 1.6]{Pechenik2017}}
	\label{thm:frames}
	Let $S$ be a subset of \textbf{Frame}($m \times n$) that is fixed under $180\degree$ rotation. Then $(\mathrm{Inc}^q(m \times n)$, $K$-$\mathrm{Pro}$, $\sigma_S)$ is $\frac{(q+1)|S|}{2}$-mesic.
\end{theorem}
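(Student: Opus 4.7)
The plan is to establish this in two stages: a global-average identity via an explicit complementation involution, followed by a per-orbit upgrade using $K$-evacuation.

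For the global average, I would define the \emph{flip-complement} map $T^\star(i,j) := q + 1 - T(m+1-i,\,n+1-j)$. Reversing both coordinates converts ``strictly increasing'' into ``strictly decreasing,'' and subtracting from $q+1$ reverses the order once more, producing an increasing tableau with entries in $\{1,\dots,q\}$; thus $T \mapsto T^\star$ is a well-defined involution on $\mathrm{Inc}^q(m \times n)$. Since $S$ is $180^\circ$-symmetric, reindexing gives $\sigma_S(T^\star) = (q+1)|S| - \sigma_S(T)$, so pairing $T$ with $T^\star$ forces the \emph{global} average of $\sigma_S$ to equal $\tfrac{(q+1)|S|}{2}$.

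For the per-orbit upgrade, I would invoke $K$-evacuation $\mathcal{E}$ together with its dihedral relation to $K$-promotion, namely $\mathcal{E}^2 = \mathrm{id}$ and $\mathcal{E} \circ K\text{-}\mathrm{Pro} \circ \mathcal{E} = K\text{-}\mathrm{Pro}^{-1}$. The key technical lemma would be a \emph{frame-restriction property}: for every frame box $b$ and every $T \in \mathrm{Inc}^q(m \times n)$,
\[
\mathcal{E}(T)(b) \;=\; q + 1 - T(b'),
\]
where $b'$ denotes the $180^\circ$ rotation of $b$. That is, $\mathcal{E}$ coincides with $T \mapsto T^\star$ on frame boxes even though the two maps generally disagree in the interior. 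Combined with the $180^\circ$-symmetry of $S$, this yields $\sigma_S(\mathcal{E}(T)) = (q+1)|S| - \sigma_S(T)$.

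With the lemma in hand, each $K$-promotion orbit $\orb$ is either $\mathcal{E}$-stable or paired with its image $\mathcal{E}(\orb)$. In the stable case, pairing each $T \in \orb$ with $\mathcal{E}(T) \in \orb$ immediately gives orbit average $\tfrac{(q+1)|S|}{2}$. In the paired case, the two orbits have equal sizes and $\sigma_S$-averages summing to $(q+1)|S|$; to conclude the averages are actually equal, I would match tableaux in the pair via a power of $K$-promotion and use that any asymmetry would violate the dihedral fixed-point identities for $K$-promotion, as governed by the cyclic sieving phenomenon for $\mathrm{Inc}^q(m \times n)$.

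The main obstacle is the frame-restriction lemma. In the standard-tableau setting, evacuation equals $T \mapsto T^\star$ on every box, but for increasing tableaux this fails because $K$-jeu de taquin can short-circuit through repeated labels. The task is to show that such short-circuits never affect entries tracked from the boundary: slides originating at inner corners cannot cross a frame entry without being witnessed by a larger label on the far side, and an induction on the sequence of slides constituting $\mathcal{E}$ then shows that frame entries evolve exactly as they would in the standard-tableau case. This boundary-regularity of $K$-jeu de taquin is the delicate technical step.
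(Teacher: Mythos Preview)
This theorem is quoted from \cite{Frames} and is not proved in the present paper, so there is no in-paper argument to compare against. That said, your outline has a real gap independent of any comparison.

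The problematic step is the ``paired-orbit'' case. Granting your frame-restriction lemma, $K$-evacuation $\mathcal{E}$ sends each $K$-promotion orbit to a $K$-promotion orbit, and on an $\mathcal{E}$-stable orbit your pairing argument is fine. But when $\mathcal{E}(\orb)\neq\orb$, all you know is that the \emph{combined} average over $\orb\cup\mathcal{E}(\orb)$ is $\tfrac{(q+1)|S|}{2}$; nothing you have written forces the two individual averages to agree. Invoking ``dihedral fixed-point identities'' or ``the cyclic sieving phenomenon'' does not help: cyclic sieving counts fixed points of powers of $K$-$\mathrm{Pro}$ and says nothing about the distribution of a statistic across a pair of orbits swapped by an outer involution. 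Absent a proof that $\mathcal{E}$ lies in the cyclic group generated by $K$-$\mathrm{Pro}$ on each orbit (which is not known in this generality for increasing tableaux), this case is genuinely open in your argument.

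Separately, the frame-restriction lemma itself is the entire content of the result, and your sketch (``slides originating at inner corners cannot cross a frame entry without being witnessed by a larger label on the far side'') is not a proof. Pechenik's actual argument in \cite{Frames} does not go through evacuation at all: he tracks, for each antipodal pair of frame boxes, how the entries evolve under a single application of $K$-$\mathrm{Pro}$, and shows this evolution is governed by cyclic rotation of an associated binary word. That direct orbit-by-orbit analysis sidesteps the paired-orbit issue entirely. If you want to pursue the evacuation route, you would need both a rigorous proof of the frame-restriction identity and an independent argument that every $K$-$\mathrm{Pro}$ orbit is $\mathcal{E}$-stable.
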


The following is a new corollary of Theorem \ref{thm:frames}. It uses the bijection $\Psi^{-1}$ and techniques similar to those of Corollary \ref{cor:rotsym} to prove a more general analogue of Corollary \ref{cor:rotsym} in the product of three chains.

\begin{corollary}
	\label{cor:gen3chain}
	Let $P=[a] \times [b] \times [c]$. Additionally, let $L^{j_1,k_1}_{1,2}$ and $L^{j_2,k_2}_{1,2}$ be such that the coordinates $(j_1,k_1)$ and $(j_2,k_2)$ are antipodal in $[a] \times [b]$, each $j_i$ is $1$ or $a$, and each $k_i$ is $1$ or $b$. If $f_L(I)$ denotes the cardinality of $I$ on $L^{j_1,k_1}_{1,2}$ and $L^{j_2,k_2}_{1,2}$, then for $v \in \{\pm 1\}^n$, $(J([a]\times[b]\times[c]),\mathrm{Pro}_{v}, f_L)$ is $c$-mesic.
\end{corollary}
\begin{proof}
	Similarly to the proof of Corollary \ref{cor:rotsym}, the antipodal coordinates $(j_1,k_1)$ and $(j_2,k_2)$ are chosen so that the columns $L^{j_1,k_1}_{1,2}$ and $L^{j_2,k_2}_{1,2}$ correspond to a set of boxes in an increasing tableau fixed under $180^{\circ}$ rotation. Additionally, the columns correspond to boxes in the frame of the tableau. As a result, we know $(J([a]\times[b]\times[c]),\mathrm{Pro}_v, f_L)$ exhibits homomesy by translating the refined homomesy result on increasing tableaux from Theorem \ref{thm:frames} to $J([a]\times[b]\times[c])$ using the bijection $\Psi^{-1}$. We must now show that the triple is $c$-mesic. Due to rotational symmetry, the order filters of $P$ are in bijection with the order ideals of $P$. Let $I \in J(P)$ and let $H \in J(P)$ be the order ideal isomorphic under rotation to the order filter $P \setminus I$. Because the two columns $L^{j_1,k_1}_{1,2}$ and $L^{j_2,k_2}_{1,2}$ each contain $c$ elements, $f_L(I)+f_L(H)=2 c$. Therefore, the global average of $f_L$ is $c$ and as a result, the triple is $c$-mesic. This gives the result for $v=(1,1,-1)$; using recombination we obtain the result for all $v$.
\end{proof}

Most of our results have required a chain of size two. However, Corollary \ref{cor:gen3chain} applies to an arbitrary product of three chains, but to antipodal columns on the ``outside" of the poset.

\begin{example}
	Consider the product of chains $[3] \times [4] \times [2]$ in Figure \ref{fig:frameexample}. Note that $(2,1)$ and $(2,4)$ are antipodal in the poset $[3] \times [4]$. Also note that the red columns $L^{2,1}_{1,2}$ and $L^{2,4}_{1,2}$ correspond to boxes in the frame of the partition of shape $3 \times 4$. As a result, Corollary \ref{cor:gen3chain} can be applied, which says if we take any orbit of $J([3]\times [4] \times [2])$ under $\mathrm{Pro}_{v}$, the average of the cardinality of columns $L^{2,1}_{1,2}$ and $L^{2,4}_{1,2}$ over the orbit will be 2.
\end{example}

\begin{figure}[htbp]
	\begin{center}
		\includegraphics[width=.3\linewidth]{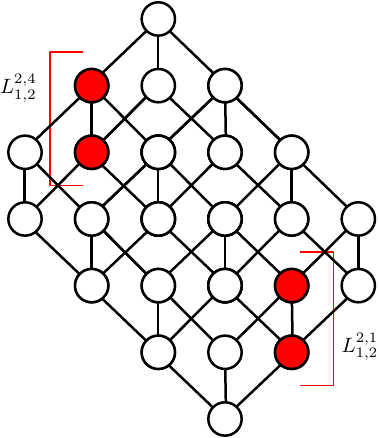} \hspace{.75in}
		\includegraphics[width=.5\linewidth]{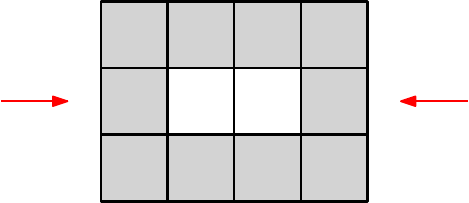}
		\caption{The poset elements $(2,1)$ and $(2,4)$ are antipodal in $[3] \times [4]$. Additionally, the columns $L^{2,1}_{1,2}$ and $L^{2,4}_{1,2}$ correspond to boxes in the frame of the partition of shape $3 \times 4$, as shown by the figure on the right.}
		\label{fig:frameexample}
	\end{center}
\end{figure}

\section{Beyond the product of chains}
We opted to state our recombination results in Section 4 for the product of chains rather than in full generality in order to emphasize the important aspects of the proofs without further complicating the notation. We now generalize the recombination technique from a product of chains to any ranked poset. We begin by presenting several previous definitions in greater generality.

\begin{definition}{\cite[Definition 3.13]{DPS2017}}
	We say that an \emph{$n$-dimensional lattice projection} of a ranked poset $P$ is an order and rank preserving map $\pi : P \rightarrow \mathbb{Z}^n$, where the rank function on $\mathbb{Z}^n$ is the sum of the coordinates and $x \le y$ in $\mathbb{Z}^n$ if and only if the componentwise difference $y-x$ is in $(\mathbb{Z}_{\ge 0})^n$.
\end{definition}
\begin{definition}{\cite[Definition 3.14]{DPS2017}}
	Let $P$ be a poset with an $n$-dimensional lattice projection $\pi$ and let $v  \in \{\pm 1\}^n$.  Let $T_{\pi, v}^i$ be the product of toggles $t_x$ for all elements $x$ of $P$ that lie on the affine hyperplane $\langle \pi(x),v \rangle=i$.  If there is no such $x$, then this is the empty product, considered to be the identity.  Define \textit{promotion with respect to $\pi$ and $v$} as the (finite) toggle product Pro$_{\pi,v}=\dots T_{\pi,v}^{-2} T_{\pi,v}^{-1} T_{\pi,v}^{0} T_{\pi,v}^{1} T_{\pi,v}^{2}\dots$
\end{definition}

For $P$ with $n$-dimensional lattice projection, we generalize the definition of a layer from Definition \ref{def:layers} using the lattice projection $\pi(P)$. More specifically, because $\pi(P) \in \mathbb{Z}^n$, we  use our notion of layers on a product of chains and the preimage of $\pi$ to define layers on $P$.

\begin{definition}
	\label{def:genlayers}
	Let $P$ be a poset with $n$-dimensional lattice projection $\pi$. Define the $j$th $\gamma$-layer of $P$ as \[L_{\gamma}^j=\{\pi^{-1}(i_1,i_2,\ldots, i_n) \ | \ i_{\gamma}=j \text{ and } (i_1,i_2,\ldots, i_n)\in \mathbb{Z}^n\}\] and
	the $j$th $\gamma$-layer of $I\in J(P)$ as \[L_{\gamma}^j(I)=L_{\gamma}^j \cap I.\]
	Additionally, given $L_{\gamma}^j$ and $L_{\gamma}^j(I)$, we abuse notation to define \[(L_{\gamma}^{j})^{\widehat{\gamma}}=\{\pi^{-1}((i_1,i_2,\ldots, i_n)^{\widehat{\gamma}}) \ | \ i_{\gamma}=j \text{ and } (i_1,i_2,\ldots, i_n)\in \mathbb{Z}^n\},\]
	\[L_{\gamma}^j(I)^{\widehat{\gamma}}=(L_{\gamma}^{j})^{\widehat{\gamma}} \cap I,\]
	where $\pi^{-1}((i_1,i_2,\ldots, i_n)^{\widehat{\gamma}})$ denotes forming the poset given by the preimage of the $(n-1)$-dimensional poset obtained from deleting the coordinate $\gamma$ and $(L_{\gamma}^{j})^{\widehat{\gamma}} \cap I$ denotes using elements in the order ideal $I$ to form an order ideal with the corresponding elements in $(L_{\gamma}^{j})^{\widehat{\gamma}}$.
\end{definition}

In order to prove results regarding recombination in Section 4, we relied heavily on the ability to commute the toggles of promotion. More specifically, we showed that any promotion could be thought of as sequence of $(n-1)$-dimensional promotions on the layers of our product of chains. We introduce the notation for an analogous result.

\begin{definition}
	Let $P$ be a poset with $n$-dimensional lattice projection $\pi$, $v \in \{\pm 1\}^n$, and $\gamma \in [n]$. We define $T^{j}_{\mathrm{Pro}_{\pi,v^{\widehat{\gamma}}}}$ as the toggle product of Pro$_{\pi,v^{\widehat{\gamma}}}$ on $(L_{\gamma}^j)^{\widehat{\gamma}}$.
\end{definition}

This definition allows us to perform an $(n-1)$-dimensional promotion on a single layer of $P$. Before we give a general definition of recombination, we present a higher dimensional analogue of a result of Striker and Williams. In \cite[Theorem 5.4]{SW2012}, they found a conjugating toggle element; in other words, the toggles necessary to state a explicit bijection from $J(P)$ under $\row^{-1}$ to $J(P)$ under Pro using conjugation. We determine conditions on $v$ and $w$ such that we can find a conjugating toggle element to conjugate from $J(P)$ under $\mathrm{Pro}_{\pi,v}$ to $J(P)$ under $\mathrm{Pro}_{\pi,w}$.

\begin{theorem}
	\label{thm:conjtoggle}
	Let $P$ be a poset with $n$-dimensional lattice projection $\pi$ with $v, w \in \{\pm 1\}^n$ such that $v_\gamma = 1,w_\gamma = -1$, and $v^{\widehat{\gamma}}=w^{\widehat{\gamma}}$. There exists an equivariant bijection between $J(P)$ under $\mathrm{Pro}_{\pi,v}$ and $\mathrm{Pro}_{\pi,w}$ given by acting on an order ideal by $D_\gamma=\prod_{i=1}^{a_\gamma-1} \prod_{j=1}^{i} (T^{i+1-j}_{\mathrm{Pro}_{\pi,v^{\widehat{\gamma}}}})^{-1}$ where $L_{\gamma}^{a_\gamma}$ is the maximum non-empty layer in $P$.
\end{theorem}
\begin{proof}
	Without loss of generality, $v_\gamma = 1$ and $w_\gamma = -1$. As a result, $\mathrm{Pro}_{\pi,w} = \prod_{i=1}^{a_\gamma} T_{\mathrm{Pro}_{\pi,w^{\widehat{\gamma}}}}^{a_\gamma+1-i}$ and $\mathrm{Pro}_{\pi,v} = \prod_{i=1}^{a_\gamma} T_{\mathrm{Pro}_{\pi,v^{\widehat{\gamma}}}}^i$. Note that $w^{\widehat{\gamma}}=v^{\widehat{\gamma}}$. We will commute toggles to show $\mathrm{Pro}_{\pi,w} D_\gamma = D_\gamma \mathrm{Pro}_{\pi,v}$. When we expand, we obtain 
	\begin{align*}
	\mathrm{Pro}_{\pi,w} D_\gamma=& T_{\mathrm{Pro}_{\pi,w^{\widehat{\gamma}}}}^{a_\gamma}T_{\mathrm{Pro}_{\pi,w^{\widehat{\gamma}}}}^{a_\gamma-1}\dots T_{\mathrm{Pro}_{\pi,w^{\widehat{\gamma}}}}^{1}(T_{\mathrm{Pro}_{\pi,w^{\widehat{\gamma}}}}^{1})^{-1}(T_{\mathrm{Pro}_{\pi,w^{\widehat{\gamma}}}}^{2})^{-1}(T_{\mathrm{Pro}_{\pi,w^{\widehat{\gamma}}}}^{1})^{-1}\dots (T_{\mathrm{Pro}_{\pi,w^{\widehat{\gamma}}}}^{a_\gamma-1})^{-1} \\&(T_{\mathrm{Pro}_{\pi,w^{\widehat{\gamma}}}}^{a_\gamma-2})^{-1}\dots (T_{\mathrm{Pro}_{\pi,w^{\widehat{\gamma}}}}^{1})^{-1}
	\end{align*}
	and
	\begin{align*}
	D_\gamma \mathrm{Pro}_{\pi,v}=&(T_{\mathrm{Pro}_{\pi,w^{\widehat{\gamma}}}}^{1})^{-1}(T_{\mathrm{Pro}_{\pi,w^{\widehat{\gamma}}}}^{2})^{-1}(T_{\mathrm{Pro}_{\pi,w^{\widehat{\gamma}}}}^{1})^{-1}\dots (T_{\mathrm{Pro}_{\pi,w^{\widehat{\gamma}}}}^{a_\gamma-1})^{-1}(T_{\mathrm{Pro}_{\pi,w^{\widehat{\gamma}}}}^{a_\gamma-2})^{-1}\dots (T_{\mathrm{Pro}_{\pi,w^{\widehat{\gamma}}}}^{1})^{-1}\\ &T_{\mathrm{Pro}_{\pi,w^{\widehat{\gamma}}}}^{1}T_{\mathrm{Pro}_{\pi,w^{\widehat{\gamma}}}}^{2}\dots T_{\mathrm{Pro}_{\pi,w^{\widehat{\gamma}}}}^{a_\gamma}\\
	=&(T_{\mathrm{Pro}_{\pi,w^{\widehat{\gamma}}}}^{1})^{-1}(T_{\mathrm{Pro}_{\pi,w^{\widehat{\gamma}}}}^{2})^{-1}(T_{\mathrm{Pro}_{\pi,w^{\widehat{\gamma}}}}^{1})^{-1}\dots (T_{\mathrm{Pro}_{\pi,w^{\widehat{\gamma}}}}^{1})^{-1} T_{\mathrm{Pro}_{\pi,w^{\widehat{\gamma}}}}^{a_\alpha}.
	\end{align*}
	
	However, we can commute $T_{\mathrm{Pro}_{\pi,w^{\widehat{\gamma}}}}^{k}$ and $T_{\mathrm{Pro}_{\pi,w^{\widehat{\gamma}}}}^{j}$ or $(T_{\mathrm{Pro}_{\pi,w^{\widehat{\gamma}}}}^{j})^{-1}$ if $|j-k|>1$ because the elements in these toggles could not share a covering relation. Therefore, we can commute toggles of $\mathrm{Pro}_{\pi,w} D_\gamma$ to obtain
	\begin{align*}
	\mathrm{Pro}_{\pi,w} D_\gamma=&(T_{\mathrm{Pro}_{\pi,w^{\widehat{\gamma}}}}^{1})^{-1}(T_{\mathrm{Pro}_{\pi,w^{\widehat{\gamma}}}}^{2})^{-1}(T_{\mathrm{Pro}_{\pi,w^{\widehat{\gamma}}}}^{1})^{-1}\dots (T_{\mathrm{Pro}_{\pi,w^{\widehat{\gamma}}}}^{1})^{-1} T_{\mathrm{Pro}_{\pi,w^{\widehat{\gamma}}}}^{a_\alpha}.
	\end{align*}
	Therefore, $\mathrm{Pro}_{\pi,w} D_\gamma = D_\gamma \mathrm{Pro}_{\pi,v}$ and so $\mathrm{Pro}_{\pi,v} = (D_\gamma)^{-1} \mathrm{Pro}_{\pi,w} D_\gamma$.
\end{proof}

We now present our generalized definition of recombination with respect to an $n$-dimensional lattice projection.

\begin{definition}
	\label{def:latprojrecomb}
	Let $P$ be a poset with $n$-dimensional lattice projection $\pi$, $v \in \{\pm 1\}^n$, and $I \in J(P)$. Define $\genrecomb{\pi,v}{\gamma}=\bigcupdot_{j}L_{\gamma}^j(\mathrm{Pro}_{\pi,v}^{j-1}(I))$ where $\gamma \in [n]$. We will call $\genrecomb{\pi,v}{\gamma}$ the \emph{$(\pi,v,\gamma)-$recombination} of $I$. When context is clear, we will suppress the $(\pi,v,\gamma)$.
\end{definition}

The idea is the same as before; we take certain layers from an orbit of promotion to create a new order ideal. We can now state the analogue of Theorem \ref{thm:ndcommute}, our result regarding toggling commutation, whose proof is similar to the proof of Theorem \ref{thm:ndcommute}.

\begin{theorem}
	\label{thm:genndcommute}
	Let P be a poset with lattice projection $\pi$, $v \in \{\pm 1\}^n$, and $\gamma \in [n]$. Then $\mathrm{Pro}_{\pi,v}=\prod_{j=1}^{a_\gamma} T^{\alpha}_{\mathrm{Pro}_{\pi,v^{\widehat{\gamma}}}}$ where \[
	\alpha =
	\begin{cases}
	j   \hfill & \text{if } v_\gamma=1 \\
	a_\gamma+1-j \hfill & \text{if } v_\gamma=-1. \\
	\end{cases}
	\]
\end{theorem}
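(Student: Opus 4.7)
The plan is to mimic the proof of Theorem~\ref{thm:ndcommute}, with the lattice projection $\pi$ taking over the role played implicitly by the identity embedding in the product-of-chains setting. I would pick $x, y \in P$ whose toggles do not commute, i.e., which are connected by a covering relation in $P$, and show that $\mathrm{Pro}_{\pi,v}$ and $\prod_{j=1}^{a_\gamma} T^{\alpha}_{\mathrm{Pro}_{\pi,v^*}}$ toggle them in the same relative order. The key preliminary observation is that since $\pi$ is both rank-preserving and order-preserving and the rank-one positive elements of $(\mathbb{Z}^n,\le)$ are exactly the standard basis vectors, any covering relation $x \lessdot y$ in $P$ forces $\pi(y) = \pi(x) + e_i$ for a unique $i \in \{1,\dots,n\}$; this reduces the analysis to comparing two lattice points that differ by a single standard basis vector, exactly as in the product-of-chains argument.

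I would then split into the two cases that appeared in Theorem~\ref{thm:ndcommute}. If $i = \gamma$, then $\pi(x) \in L_\gamma^j$ and $\pi(y) \in L_\gamma^{j+1}$; the inner products $\langle \pi(x),v\rangle$ and $\langle \pi(y),v\rangle$ differ by exactly $v_\gamma$, so the hyperplane sweep $\mathrm{Pro}_{\pi,v}$ toggles $x$ before $y$ precisely when $v_\gamma = -1$, and the layer product $\prod_{j=1}^{a_\gamma} T^{\alpha}_{\mathrm{Pro}_{\pi,v^*}}$ does the same by the definition of $\alpha$. If $i \ne \gamma$, then $\pi(x)$ and $\pi(y)$ share a $\gamma$-coordinate, so both $t_x$ and $t_y$ occur inside the same factor $T^{\alpha}_{\mathrm{Pro}_{\pi,v^*}}$; the identity $\langle \pi(x),v\rangle - \langle \pi(y),v\rangle = \langle \pi(x)^*,v^*\rangle - \langle \pi(y)^*,v^*\rangle$ then shows that the relative order induced by $v$ in the ambient sweep matches the relative order induced by $v^*$ within the single-layer promotion.

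The main subtlety to address, which does not actually cause trouble, is that several distinct elements of $P$ may project to the same point of $\mathbb{Z}^n$. Rank-preservation forces any such elements to sit at the same rank in $P$ and hence to be incomparable, so they cannot participate in a covering relation with one another; their toggles commute, and the order in which they are applied inside a single hyperplane factor is irrelevant to the identity being proved. Once this is dispatched, the case analysis above covers every pair of noncommuting toggles in $\mathrm{Pro}_{\pi,v}$ and $\prod_{j=1}^{a_\gamma} T^{\alpha}_{\mathrm{Pro}_{\pi,v^*}}$, and the equality of the two toggle products follows.
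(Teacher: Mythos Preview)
Your proposal is correct and follows essentially the same route the paper indicates (and that appears in the author's commented-out draft proof): reduce to pairs $x,y$ whose toggles fail to commute, use that $\pi$ is order- and rank-preserving to force $\pi(y)-\pi(x)=e_i$, and then run the two-case analysis of Theorem~\ref{thm:ndcommute} on the images $\pi(x),\pi(y)$. Your explicit treatment of the non-injectivity of $\pi$ is a point the paper leaves implicit, so your write-up is if anything slightly more careful on that front.
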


As in the product of chains setting, we have conditions to determine when generalized recombination gives us an order ideal. The proof is similar to the proof of Lemma \ref{lemma:orderideal} with the inclusion of the lattice projection $\pi$.

\begin{lemma}
	\label{lemma:genorderideal}
	Let $I\in J(P)$. Suppose we have $v \in \{\pm 1\}^n$ and $\gamma$ such that $v_\gamma=1$. Then $\genrecomb{\pi,v}{\gamma}$ is an order ideal of $P$.
\end{lemma}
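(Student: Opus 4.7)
The plan is to mimic the proof of Lemma~\ref{lemma:orderideal}, substituting Theorem~\ref{thm:genndcommute} for Theorem~\ref{thm:ndcommute} and threading the lattice projection $\pi$ through the argument. I would pick an arbitrary $(i_1,\dots,i_n) \in \genrecomb{v}{\gamma}$ and show that every cover below it in $\pi(P)$ also lies in $\genrecomb{v}{\gamma}$. Since $\pi$ is rank-preserving, the covering relations in $\pi(P)$ are exactly those pairs differing by $1$ in a single coordinate, so it suffices to check that $(i_1,\dots,i_k-1,\dots,i_n) \in \genrecomb{v}{\gamma}$ whenever this point lies in $\pi(P)$.

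For the case $k \neq \gamma$, I would use the definition of $\genrecomb{v}{\gamma}$ together with the fact, built into Definition~\ref{def:genlayers}, that the $\pi$-image of an order ideal of $P$ is an order ideal of $\pi(P)$. Since $(i_1,\dots,i_n) \in L_\gamma^{i_\gamma}(\pi(\mathrm{Pro}_{\pi,v}^{i_\gamma - 1}(I)))$, decreasing a non-$\gamma$ coordinate keeps us inside the order ideal $\pi(\mathrm{Pro}_{\pi,v}^{i_\gamma - 1}(I))$ and preserves the $\gamma$-coordinate $i_\gamma$, so the resulting point is again in $L_\gamma^{i_\gamma}(\pi(\mathrm{Pro}_{\pi,v}^{i_\gamma - 1}(I))) \subseteq \genrecomb{v}{\gamma}$. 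For $k = \gamma$ with $i_\gamma \geq 2$ (the case $i_\gamma = 1$ being vacuous), I would invoke Theorem~\ref{thm:genndcommute}: writing $\mathrm{Pro}_{\pi,v} = \prod_{j=1}^{a_\gamma} T_{\mathrm{Pro}_{\pi,v^*}}^{a_\gamma + 1 - j}$, I would commute the toggles so that the $L_\gamma^{i_\gamma}$-toggles are performed before the $L_\gamma^{i_\gamma - 1}$-toggles in the step from $\mathrm{Pro}_{\pi,v}^{i_\gamma - 2}(I)$ to $\mathrm{Pro}_{\pi,v}^{i_\gamma - 1}(I)$. The order-ideal property of the intermediate state, together with the fact that the lower $\gamma$-layers are untouched at this stage, forces $(i_1,\dots,i_\gamma - 1,\dots,i_n) \in L_\gamma^{i_\gamma - 1}(\pi(\mathrm{Pro}_{\pi,v}^{i_\gamma - 2}(I))) \subseteq \genrecomb{v}{\gamma}$.

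I expect the main obstacle to be managing the lattice projection $\pi$ bookkeeping: everything else is a transcription of the product-of-chains proof, but one must consistently interpret layers, toggles, and the order-ideal property in $\pi(P)$ while the toggles actually act on $P$. In particular, the commutation step for $k = \gamma$ relies on reinterpreting the hyperplane toggle order of $\mathrm{Pro}_{\pi,v}$ as a sequence of layer-by-layer toggles on $\pi(P)$, which is precisely what Theorem~\ref{thm:genndcommute} provides. Once that correspondence is in hand, the verification for both cases proceeds as a direct port of Lemma~\ref{lemma:orderideal}.
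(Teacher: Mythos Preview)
Your proposal is correct and matches the paper's own approach essentially line for line: the paper states that the proof is similar to that of Lemma~\ref{lemma:orderideal} with the lattice projection $\pi$ inserted, and its (commented-out) argument handles the $k\neq\gamma$ case by definition and the $k=\gamma$ case via Theorem~\ref{thm:genndcommute}, exactly as you outline. Your additional care about when $(i_1,\dots,i_k-1,\dots,i_n)$ actually lies in $\pi(P)$ and about the $\pi$-bookkeeping is appropriate and does not deviate from the paper's method.
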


We can now state our general recombination result, which shows when recombination gives us an equivariant bijection from $J(P)$ under $\mathrm{Pro}_{u}$ to $J(P)$ under $\mathrm{Pro}_{v}$ for any poset $P$ with $n$-dimensional lattice projection. Again, we omit the proof as it is similar to the proof of Theorem \ref{thm:genrecomb} with the inclusion of the lattice projection $\pi$.
\begin{theorem}
	\label{thm:moregenrecomb}
	Let $I\in J(P)$. Suppose we have $u, v \in \{\pm 1\}^n$ and $\gamma$ such that $v_\gamma=1$, $u_\gamma =-1$, and ${v^{\widehat{\gamma}}} = {u^{\widehat{\gamma}}}$. Then $\mathrm{Pro}_{\pi,u}(\genrecomb{\pi,v}{\gamma})=\Delta_{\pi,v}^{\gamma}(\mathrm{Pro}_{\pi,v}(I))$.
\end{theorem}

In \cite{RW2015}, Rush and Wang showed that order ideals of minuscule posets under rowmotion exhibit homomesy. Using this generalized recombination result and our homomesy result on $\prodchainstwo$, we can obtain an additional homomesy result on order ideals of the type B minuscule poset cross a chain of size two. Let $P_n=([n]\times[n])/S_2$ denote a type B minuscule poset; this can be viewed as the left half of $[n]\times[n]$. Additionally, $P_n$ is isomorphic to $J([2]\times[n-1])$. See Figure \ref{fig:B3minuscule} for an example.

\begin{figure}[htbp]
	\centering
	\includegraphics[width=.15\linewidth]{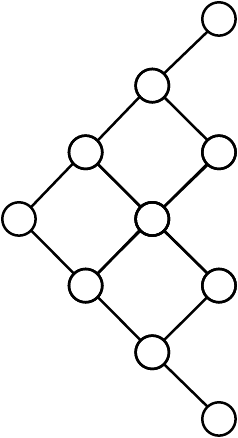}
	\caption{$P_4$, the type B minuscule poset $([4]\times[4])/S_2$}
	\label{fig:B3minuscule}
\end{figure}

\begin{corollary}
	\label{cor:typebmin}
	Let $f$ be the cardinality statistic, $\pi$ be the natural embedding of $P_n \times [2]$ into $\mathbb{Z}^3$, and $v \in \{\pm 1\}^n$. The triple $(J(P_n \times [2]),\mathrm{Pro}_{\pi,v},f)$ is $\frac{n^2+n}{2}$-mesic.
\end{corollary}

\begin{proof}
	Orbits of $J(P_n \times [2])$ under Row are in bijection with orbits of $J([n] \times [n] \times [2])$ under Row where the order ideals are symmetric about the plane $x-y=0$. Let $\orb$ be an orbit of $J(P_n \times [2])$ under Row and $\orb'$ be the orbit of $J([n] \times [n] \times [2])$ in bijection with $\orb$.  We note $\# \orb=\# \orb'$. Let $f(\orb)$ denote the sum of the cardinality of order ideals in $\orb$. By Corollary \ref{cor:maincor1}, $f(\orb)=(\# \orb')n^2$. Alternatively, we can enumerate this sum in $\orb'$ by doubling the cardinality in $\orb$ and removing what is double counted, namely, elements that appear on the plane $x-y=0$. The cardinality of these elements is $(\# \orb')n$ by Corollary \ref{cor:rotsym}. As a result, we have the following equality: $(\# \orb)n^2 = 2f(\orb)-(\# \orb)n$. Rearranging, we get $\frac{f(\orb)}{\# \orb} = \frac{n^2+n}{2}$. Therefore, $(J(P_n \times [2]),\mathrm{Row},f)$ is $\frac{n^2+n}{2}$-mesic. Using the generalized recombination result of Theorem \ref{thm:moregenrecomb}, $(J(P_n \times [2]),\mathrm{Pro}_v,f)$ must be $\frac{n^2+n}{2}$-mesic.
\end{proof}

\begin{example}
	\label{ex:minusculecorollary}
	We demonstrate the proof of Corollary \ref{cor:typebmin} with an example, referring to Figure \ref{fig:minusculecorollary}. The top left order ideal is symmetric about the plane $x-y=0$. When we apply rowmotion, we obtain the top right order ideal, which is still symmetric about the plane $x-y=0$. Because both order ideals are symmetric about $x-y=0$, they are in bijection with the bottom order ideals in Figure \ref{fig:minusculecorollary}. Extending this further, every orbit of $J(P_3 \times [2])$ under rowmotion is in bijection with an orbit of symmetric order ideals of $[3] \times [3] \times [2]$. As a result, we can translate our homomesy result on $J([3] \times [3] \times [2])$ under rowmotion to $J(P_3 \times [2])$ under rowmotion. Recombination gives the homomesy result for all $\mathrm{Pro}_{\pi,v}$.
\end{example}

\begin{figure}[htbp]
	\centering
	\includegraphics[width=.5\linewidth]{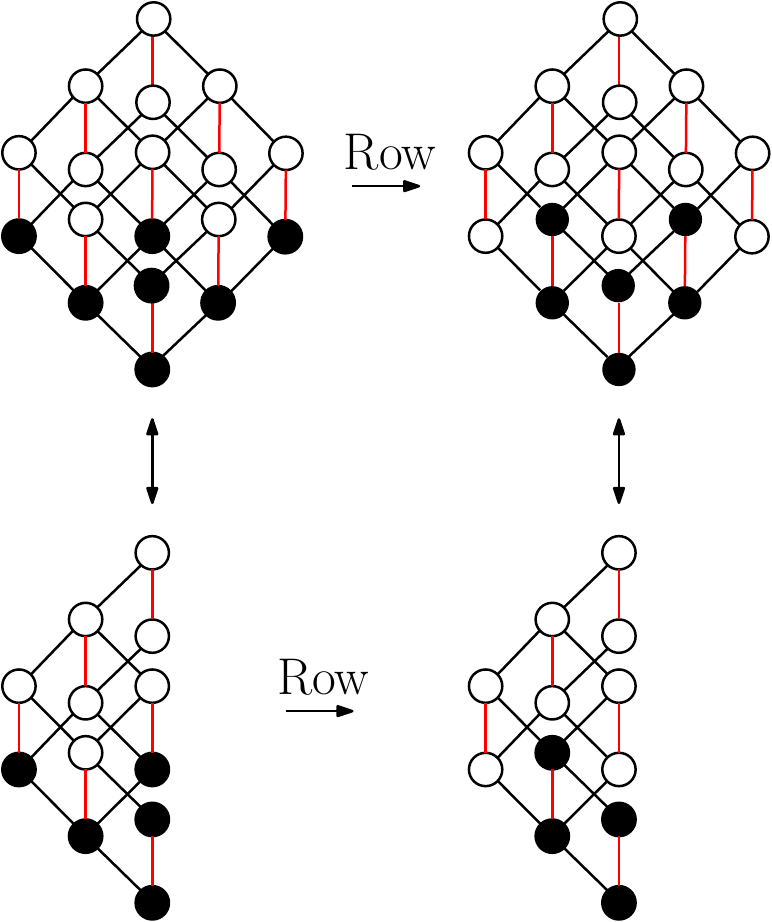}
	\caption{Applying rowmotion to the symmetric order ideal in the top left gives the symmetric order ideal in the top right. These order ideals are in bijection with the bottom order ideals, which are in $P_3 \times [2]$. See Example \ref{ex:minusculecorollary}.}
	\label{fig:minusculecorollary}
\end{figure}

\begin{example}
	\label{ex:latticeproj}
	We now give an example of generalized recombination where we cannot use a simple embedding as our three-dimensional lattice projection. Let our poset be the tetrahedral poset on the left in Figure \ref{fig:asmlatticeproj}; for more on tetrahedral posets, see \cite{Striker2011}. By Proposition 8.5 of \cite{SW2012}, we see the significance of this poset is that its order ideals are in bijection with alternating sign matrices of size $4 \times 4$. We note that this poset cannot be embedded in $\mathbb{Z}^3$ since the element $b$ is covered by four elements. We instead use the lattice projection $\pi$ in Figure \ref{fig:asmlatticeproj}, projecting into $\mathbb{Z}^2$. We note that this lattice projection is not new, as it is used in Figure 18 in \cite{SW2012}. Figure \ref{fig:asmlatticeprojlayers} shows how we will orient this in $\mathbb{Z}^2$.
	
\end{example}
\begin{figure}[htbp]
	\centering
	\includegraphics[width=.7\linewidth]{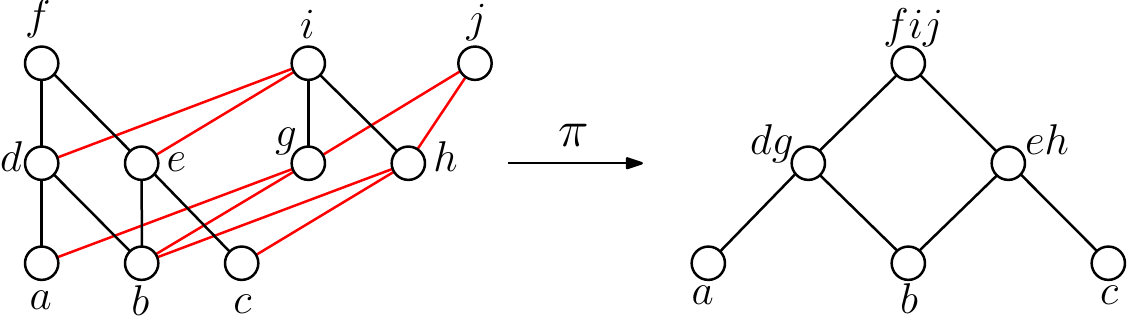}
	\caption{The poset on the left is a tetrahedral poset. For Example \ref{ex:latticeproj}, we will use the lattice projection $\pi$ to the poset on the right.}
	\label{fig:asmlatticeproj}
\end{figure}

Figure \ref{fig:asmroworb} shows a partial orbit under rowmotion. We see from Figure \ref{fig:asmlatticeprojlayers} what our layers are: the first layer consists of $a$, the second layer consists of $b,d,g$, and the third layer consists of $c,e,f,h,i,j$.

\begin{figure}[htbp]
	\centering
	\includegraphics[width=.25\linewidth]{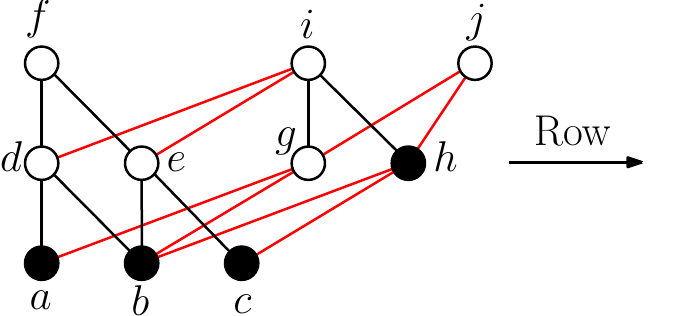}
	\includegraphics[width=.25\linewidth]{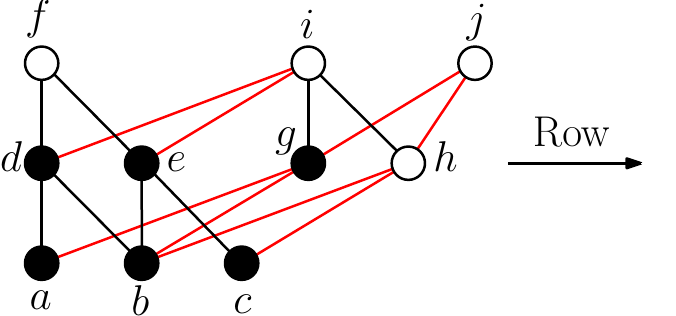}
	\includegraphics[width=.25\linewidth]{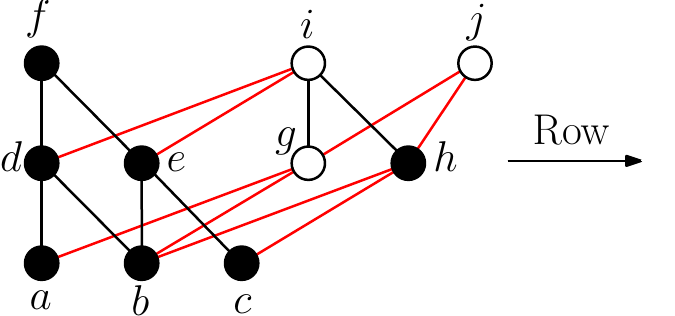}
	\includegraphics[width=.18\linewidth]{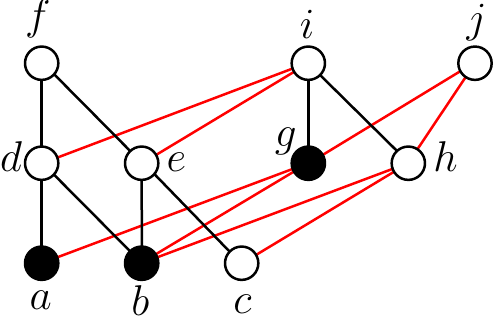}
	\caption{A partial orbit of order ideals under rowmotion. We use this example to demonstrate generalized recombination.}
	\label{fig:asmroworb}
\end{figure}

\begin{figure}[htbp]
	\centering
	\includegraphics[width=.4\linewidth]{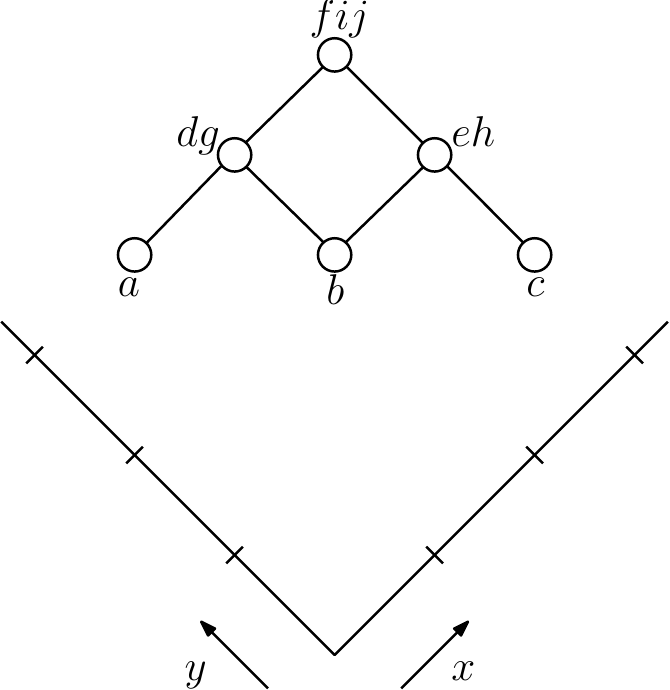}
	\caption{We orient this poset in $\mathbb{Z}^2$ in the following way. Our three layers are the diagonals from $x=1,2,$ and $3$.}
	\label{fig:asmlatticeprojlayers}
\end{figure}

From the partial orbit, we take the first layer from the first order ideal, the second layer from the second order ideal, and the third layer from the third order ideal to form a new order ideal. These are indicated with red in Figures \ref{fig:asmrowrecomborb} and \ref{fig:asmproorb}. We also take the first layer in the second order ideal, the second layer in the third order ideal, and the third layer from the fourth order ideal to form another new order ideal. These are indicated with blue in Figures \ref{fig:asmrowrecomborb} and \ref{fig:asmproorb}. Generalized recombination tells us if we apply promotion to the red order ideal, we should obtain the blue order ideal, which we can see is the case.

\begin{figure}[htbp]
	\centering
	\includegraphics[width=.25\linewidth]{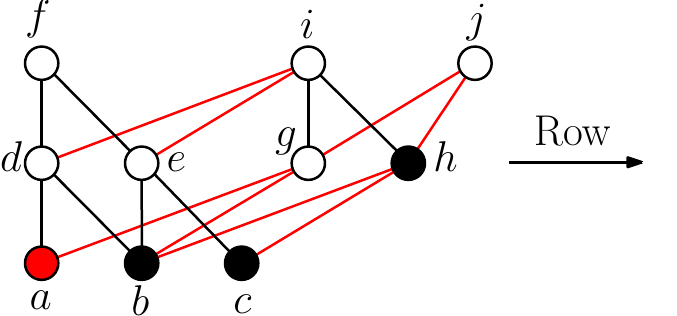}
	\includegraphics[width=.25\linewidth]{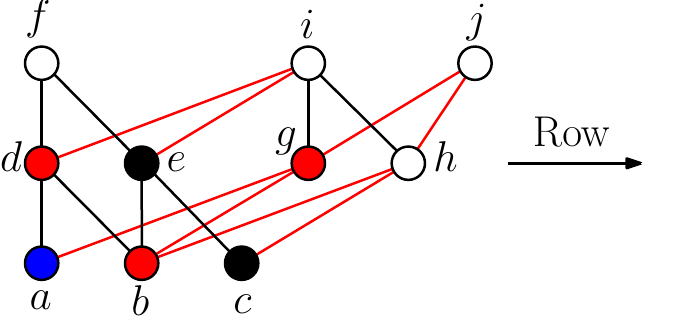}
	\includegraphics[width=.25\linewidth]{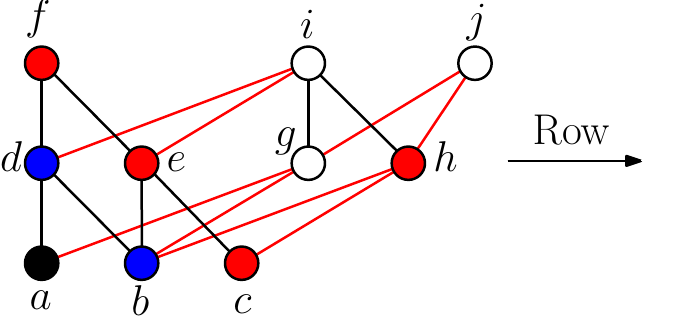}
	\includegraphics[width=.18\linewidth]{asmroworb4.pdf}
	\caption{We use the red layers and blue layers from the partial orbit to form two new order ideals.}
	\label{fig:asmrowrecomborb}
\end{figure}

\begin{figure}[htbp]
	\centering
	\includegraphics[width=.25\linewidth]{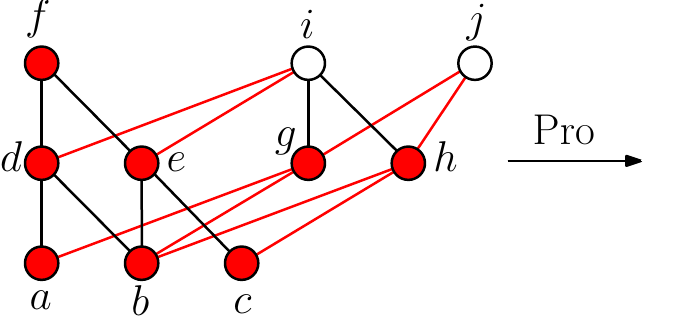}
	\includegraphics[width=.18\linewidth]{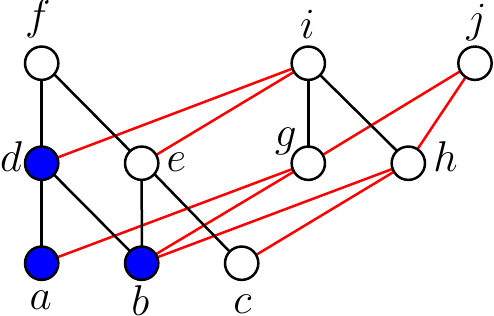}
	\caption{Applying promotion to the red order ideal gives us the blue order ideal.}
	\label{fig:asmproorb}
\end{figure}

\section*{Acknowledgments}
The author thanks his advisor, Jessica Striker, for introducing him to the problem and for helpful discussions along the way. He also would like to thank the anonymous referees for helpful comments. Additionally, he thanks the developers of SageMath \cite{sage} open source mathematical software; SageMath was instrumental in computations. The figures were constructed using Ipe \cite{ipe}.


\bibliographystyle{abbrv}
\bibliography{master}

\begin{thebibliography}{10}

\bibitem{BPS2016}
J.~Bloom, O.~Pechenik, and D.~Saracino.
\newblock Proofs and generalizations of a homomesy conjecture of {P}ropp and
  {R}oby.
\newblock {\em Discrete Math.}, 339(1):194--206, 2016.

\bibitem{CF1995}
P.~J. Cameron and D.~G. Fon-Der-Flaass.
\newblock Orbits of antichains revisited.
\newblock {\em European J. Combin.}, 16(6):545--554, 1995.

\bibitem{ipe}
O.~Cheong.
\newblock {\em The Ipe extensible drawing editor (Version 7)}, 2016.
\newblock \url{http://ipe.otfried.org/}.

\bibitem{DPS2017}
K.~Dilks, O.~Pechenik, and J.~Striker.
\newblock Resonance in orbits of plane partitions and increasing tableaux.
\newblock {\em J. Combin. Theory Ser. A}, 148:244--274, 2017.

\bibitem{EFGJMPR2016}
D.~Einstein, M.~Farber, E.~Gunawan, M.~Joseph, M.~Macauley, J.~Propp, and
  S.~Rubinstein-Salzedo.
\newblock Noncrossing partitions, toggles, and homomesies.
\newblock {\em Electron. J. Combin.}, 23(3):Paper 3.52, 2016.

\bibitem{EP2013}
D.~Einstein and J.~Propp.
\newblock Combinatorial, piecewise-linear, and birational homomesy for products
  of two chains, 2013.
\newblock \url{arXiv:1310.5294}.

\bibitem{EP2014}
D.~Einstein and J.~Propp.
\newblock Piecewise-linear and birational toggling.
\newblock In {\em 26th {I}nternational {C}onference on {F}ormal {P}ower
  {S}eries and {A}lgebraic {C}ombinatorics ({FPSAC} 2014)}, Discrete Math.
  Theor. Comput. Sci. Proc., AT, pages 513--524. Discrete Math. Theor. Comput.
  Sci., Nancy, 2014.

\bibitem{Pechenik2014}
O.~Pechenik.
\newblock Cyclic sieving of increasing tableaux and small {S}chr\"oder paths.
\newblock {\em J. Combin. Theory Ser. A}, 125:357--378, 2014.

\bibitem{Pechenik2017}
O.~Pechenik.
\newblock Promotion of increasing tableaux: frames and homomesies.
\newblock {\em Electron. J. Combin.}, 24(3):Paper 3.50, 14, 2017.

\bibitem{PR2015}
J.~Propp and T.~Roby.
\newblock Homomesy in products of two chains.
\newblock {\em Electron. J. Combin.}, 22(3):Paper 3.4, 29, 2015.

\bibitem{Roby2016}
T.~Roby.
\newblock Dynamical algebraic combinatorics and the homomesy phenomenon.
\newblock In {\em Recent Trends in Combinatorics}, pages 619--652, Cham, 2016.
  Springer International Publishing.

\bibitem{RW2015}
D.~B. Rush and K.~Wang.
\newblock On orbits of order ideals of minuscule posets {II}: homomesy, 2015.
\newblock \url{arXiv:1509.08047}.

\bibitem{Sch1972}
M.~P. Sch\"utzenberger.
\newblock Promotion des morphismes d'ensembles ordonn\'es.
\newblock {\em Discrete Math.}, 2:73--94, 1972.

\bibitem{sage}
W.~Stein et~al.
\newblock {\em {S}age {M}athematics {S}oftware ({V}ersion 7.3)}.
\newblock The Sage Development Team, 2016.
\newblock \url{http://www.sagemath.org}.

\bibitem{Striker2011}
J.~Striker.
\newblock A unifying poset perspective on alternating sign matrices, plane
  partitions, {C}atalan objects, tournaments, and tableaux.
\newblock {\em Adv. in Appl. Math.}, 46(1-4):583--609, 2011.

\bibitem{Striker2017}
J.~Striker.
\newblock Dynamical algebraic combinatorics: promotion, rowmotion, and
  resonance.
\newblock {\em Notices Amer. Math. Soc.}, 64(6):543--549, 2017.

\bibitem{SW2012}
J.~Striker and N.~Williams.
\newblock Promotion and rowmotion.
\newblock {\em European J. Combin.}, 33(8):1919--1942, 2012.

\end{thebibliography}

\end{document}